
\documentclass[preprint,onecolumn]{IEEEtran}
\def\GC{\mathcal{C}_{\dm,p}}

\def\be{\begin{equation}}
\def\ee{\end{equation}}
\def\beaN{\setlength{\arraycolsep}{0.0em}\begin{eqnarray*}}
\def\eeaN{\end{eqnarray*}\setlength{\arraycolsep}{5pt}}
\def\bea{\setlength{\arraycolsep}{0.0em}\begin{eqnarray}}
\def\eea{\end{eqnarray}\setlength{\arraycolsep}{5pt}}

\def\dm{n}

\def\Nd{\NN^\dm}
\def\Rd{\RR^\dm}
\def\Zd{\ZZ^\dm}

\def\Td{\TT^\dm}

\def\NN{\mathbb{N}}
\def\RR{\mathbb{R}}
\def\ZZ{\mathbb{Z}}
\def\TT{\mathbb{T}}
\def\Fp{F_p}




\def\dil{\Lam}

\def\set{\Gamma}
\def\mod{{\rm mod}\,}

\newtheorem{theorem}{Theorem}
\newtheorem{corollary}{Corollary}
\newtheorem{lemma}{Lemma}

\newtheorem{result}{Result}
\newtheorem{definition}{Definition}
\newtheorem{remark}{Remark}
\newtheorem{example}{Example}
\newtheorem{proof}{Proof}

\def\beaN{\setlength{\arraycolsep}{0.0em}\begin{eqnarray*}}
\def\eeaN{\end{eqnarray*}\setlength{\arraycolsep}{5pt}}
\def\bea{\setlength{\arraycolsep}{0.0em}\begin{eqnarray}}
\def\eea{\end{eqnarray}\setlength{\arraycolsep}{5pt}}

\def\be{\begin{equation}}
\def\ee{\end{equation}}

\def\dm{n}

\def\Nd{\NN^\dm}
\def\Rd{\RR^\dm}
\def\Zd{\ZZ^\dm}

\def\Td{\TT^\dm}

\def\NN{\mathbb{N}}
\def\RR{\mathbb{R}}
\def\ZZ{\mathbb{Z}}
\def\TT{\mathbb{T}}



\def\du{{\rm d}}
\def\disp{\displaystyle}

\def\bks{\backslash}


\def\gam{\gamma}

               \def\Lam{\Lambda}

\def\ome{\omega}                


\def\hata{{\widehat a}}

\def\hatg{{\widehat g}}
\def\hatG{{\widehat G}}
\def\hath{{\widehat h}}
\def\hatH{{\widehat H}}



\def\hat#1{{\widehat {#1}}}


\usepackage{cite}
\usepackage{graphicx}
\usepackage{amssymb}
\usepackage{enumerate}
\usepackage{bm,bbm}
\usepackage{subfigure}
\usepackage{booktabs}
\usepackage{tabularx}
\usepackage{array}
\usepackage{tikz}

\begin{document}




\title{Prime Coset Sum: A Systematic Method for Designing Multi-D Wavelet Filter Banks with Fast Algorithms}

\author{Youngmi~Hur$^{1,2}$ and Fang~Zheng$^{1}$\thanks{This research was partially supported by NSF Grant DMS-1115870.}
\\

$^1${\it Department of Applied Mathematics and Statistics, Johns Hopkins University, Baltimore, MD 21218, USA} \\
$^2${\it Department of Mathematics, Yonsei University, Seoul 120-749, Korea}}


\maketitle


\begin{abstract}
As constructing multi-D wavelets remains a challenging problem, we propose a new method called prime coset sum to construct multi-D wavelets. Our method provides a systematic way to construct multi-D non-separable wavelet filter banks from two 1-D lowpass filters, with one of whom being interpolatory. Our method has many important features including the following: 1) it works for any spatial dimension, and any prime scalar dilation, 2) the vanishing moments of the multi-D wavelet filter banks are guaranteed by certain properties of the initial 1-D lowpass filters, and furthermore, 3) the resulting multi-D wavelet filter banks are associated with fast algorithms that are faster than the existing fast tensor product algorithms.
\end{abstract}





\section{Preliminaries}
\label{section:preliminaries}
\subsection{Introduction}
\label{subS:introduction}
Wavelet representation has been one of the most popular data representations in the last two decades. Wavelet filter banks, which can lead to wavelet systems in $L_2(\Rd)$ under some well-understood constraints, has been widely used in Signal Processing applications. In order to obtain wavelet representation for multi-dimensional (multi-D) data, one needs multi-D wavelets. Tensor product is the most common method for constructing multi-D wavelets, and the resulting wavelets are typically referred to as the separable wavelets. However, the separable wavelets constitute only a small portion of multi-D wavelets, and they have some unavoidable limitations. One of the limitations of tensor-product-based wavelets is that the resulting multi-D filters have dense supports. It is well known that the fast algorithms associated with tensor-product-based wavelets have a complexity constant (cf. Section~\ref{subS:algorithms} for the definition of complexity constant) that increases linearly with the spatial dimension $\dm$. While this complexity may be satisfactory for many signal processing applications, it can pose a problem for many other signal processing applications, including the case when we deal with large volume data such as medical images in \cite{R}, Geographic Information Systems images in \cite{Uy} and seismic data in \cite{VED}. Moreover, it is known that tensor-product-based discrete wavelet transform is memory consuming and cannot directly obtain the target subband signals, due to its dependent subband decomposition process \cite{HGC}. There have been many researches on improving the implementation of the existing tensor-product-based wavelets \cite{OM,MW,PN,DCL}, as well as on constructing new non-tensor-based multi-D wavelets 
\cite{KV1,CS,KV2,AVM,MX,RiS1,DKT,HeL,JRS,C,CaDo,KoSw,PM,Ve,SCD,SS,CMPX,CMX,Tr,DV3,HYY,QLCCR,YDCC,Z}. However, most of these new constructions work only for low dimensions or have additional constraints on the lowpass filters. Furthermore, most of them are not associated with fast algorithms, preventing them from being widely used in practice.

Recently, the authors introduced a new method called coset sum for constructing non-tensor-based multi-D wavelets in \cite{HZ}. There it was shown that the resulting wavelets are associated with fast algorithms whose complexity constant does not increase as the spatial dimension increases. It was also shown there that many features of tensor product that makes it attractive in wavelet construction still hold true for coset sum. 

However, similar to the tensor product method, coset sum also assumes the dyadic dilation. We recall that the $\dm\times\dm$ matrix $\dil$ is called a {\it dilation matrix} if it is an integer matrix whose spectrum lies outside the closed unit disc. It determines the exact way of how downsampling and upsampling are performed in wavelets or wavelet filter banks. The dilation is called {\it scalar} if the dilation matrix is a scalar multiple of the identity matrix ${\tt I}_\dm$, i.e., $\dil=\lambda  {\tt I}_\dm$ with $\lambda\ge2$ an integer. In particular, it is called {\it dyadic} if $\dil=2 {\tt I}_\dm$. In this paper, we say that the dilation is {\it prime} if $\dil=p{\tt I}_\dm$ for a prime number $p$. Wavelets with dyadic dilation are referred to as dyadic wavelets. Dyadic wavelets are the standard and traditional types of wavelets, however they are not suitable for all applications (see, for example, \cite{PoCa,XXW,GLWM}).  

In this paper, we show that we can generalize the coset sum in the sense that multi-D wavelet filter banks with fast algorithms can be constructed for any prime dilation $p {\tt I}_\dm$. We also show that the complexity constant for our fast algorithms with prime dilation $p {\tt I}_\dm$ is independent of the spatial dimension.

The organization of this paper is as follows. The rest of Section~\ref{section:preliminaries} is a brief review of some relevant concepts including the coset sum method. In Section~\ref{section:pcs} we discuss a possible generalization of the coset sum, which we call prime coset sum, together with its properties. In Section~\ref{section:FBs} we present a new method to construct multi-D wavelet filter banks based on the prime coset sum refinement masks and show that they are associated with fast algorithms. Section~\ref{section:conclusion} is a summary of our results. Some technical proofs and details in this paper are placed in Appendix. 

\subsection{Notation and Basic Concepts}
\label{subS:notation}
Let $\dil$ be a dilation matrix and let $q:=|\det\dil|$. In the multiresolution analysis \cite{Ma1} setting, the (compactly supported) scaling or refinable function $\phi$ (with dilation $\dil$) satisfies the following refinement relation:
\bea
\label{eq:scaling}
\phi(\cdot)=\sum_{k\in\Zd}h_{\phi}(k)\phi(\dil\cdot-k),
\eea
where $h_{\phi}:\Zd\rightarrow\RR$ is the associated finitely supported filter with dilation $\dil$.

A mask {\it associated with} a finitely supported filter $h:\Zd\rightarrow\RR$ is a Laurent trigonometric polynomial defined as
\beaN
\tau(\ome):={1\over q}\sum_{k\in\Zd}h(k)e^{-ik\cdot\ome}=:\hath(\ome),
\eeaN
for any $\ome\in\Td:=[-\pi,\pi]^\dm$. That is, $\tau=\hath$ is the  Fourier transform of the filter $h$, up to a normalization. Throughout this paper, we use $\hata$ to denote this Fourier transform of $a$.

By taking the Fourier transform of (\ref{eq:scaling}), the refinement relation can be recast as 
\beaN
\hat{\phi}(\dil^\ast\ome)=\tau(\ome)\hat{\phi}(\ome),\quad \forall \ome\in\Td,
\eeaN
where $\tau$ is the mask associated with $h_\phi$, and the superscript $\ast$ is used to denote the conjugate transpose of a matrix, and hence $\dil^\ast$ is the same as $\dil^T$, the transpose of $\dil$, in this case.

A mask $\tau$ with $\tau(0)=0$ is typically referred to as a {\it wavelet mask}. In this paper, we use the normalization of the mask so that a mask with $\tau(0)=1$ is referred to as a {\it refinement mask}. This is equivalent to $\sum_{k\in
\Zd}h(k)=q$, which is our normalization for a filter to be {\it lowpass}. A refinement mask $\tau$ is called {\it interpolatory} if, for any $\ome\in\Td$, 
\beaN
\sum_{\gam\in\set^\ast}\tau(\ome+\gam)=1,
\eeaN
where $\set^\ast$ is a complete set of representatives of the distinct cosets of $2\pi(((\dil^\ast)^{-1}\Zd)/\Zd)$ containing $0$. For example, for the scalar dilation with $\lambda$, the set ${2\pi\over \lambda}\{0,1,\cdots,\lambda-1\}^\dm$ can be used for $\set^\ast$. We note that $\tau$ is interpolatory if and only if its corresponding filter $h$ satisfies
\bea
\label{eq:interpolatory}
h(k)=
\cases{
1,\quad\mbox{if $k=0$},\cr
            0,\quad\mbox{if $k\in\dil\Zd\bks0$}.\cr
}
\eea

The order of zeros of $\tau$ at $\gam\in\set^\ast\bks0$ is called the {\it accuracy number} of $\tau$. Throughout this paper, we assume that all refinement masks have at least accuracy number one. The order of zeros of $\tau$ at the origin is called the {\it number of vanishing moments} of $\tau$. Thus a mask is a wavelet mask if and only if it has at least one vanishing moment. The order of zeros of $1-\tau$ at the origin is called the {\it flatness number} of $\tau$. Thus a mask is a refinement mask if and only if it has at least flatness number one. Throughout this paper, we use the accuracy number, the number of vanishing moments, and the flatness number both for a mask and for the filter associated with it. 

Two refinement masks $\tau$ and $\tau^\du$ are called {\it biorthogonal} if
\beaN
\sum_{\gam\in\set^\ast}(\overline{\tau}\tau^\du)(\ome+\gam)=1,
\eeaN
for any $\ome\in\Td$. Here and below, the overline is used to denote the complex conjugate. For the corresponding filters $h$ and $g$ of $\tau$ and $\tau^\du$, respectively, the biorthogonality condition becomes 
\beaN
\sum_{k\in\Zd}h(k)g(k+\dil l)=q\delta_{l,0}=
\cases{ 
              q,\quad\mbox{if $l=0$},\cr
              0,\quad\mbox{if $l\in\Zd\bks0$}.\cr
}
\eeaN
For a pair of biorthogonal refinement masks $\tau$ and $\tau^\du$ and wavelet masks $t_j$ and $t_j^\du$, $j=1,\ldots,q-1$, we refer to $(\tau,(t_j)_{j=1,\ldots,q-1})$ and $(\tau^\du,(t_j^\du)_{j=1,\ldots,q-1})$ as the {\it combined biorthogonal masks} if they satisfy the following condition: for every $\ome\in\Td$,
\be
\label{eq:muep}
\overline{\tau(\ome+\gam)}\tau^\du(\ome)+\sum_{j=1}^{q-1}\overline{t_j(\ome+\gam)}t_j^\du(\ome)=\delta_{\gam,0}=
\cases{
1,\quad\mbox{if $\gam=0$},\cr
             0,\quad\mbox{if $\gam\in\set^\ast\bks0$}.\cr
}
\ee
It is well known that the combined biorthogonal masks can give rise to a biorthogonal wavelet system in $L_2(\Rd)$ (see, for example, \cite{RS2}). 

A filter bank is a finite set of filters. We consider only the filter banks that are non-redundant with the perfect reconstruction property \cite{SN}. A (non-redundant) filter bank  consists of analysis bank and synthesis bank, which are collections of $q=|\det\dil|$ filters linked by downsampling and upsampling operators, respectively, associated with the dilation matrix $\dil$. The analysis bank splits the input signal into $q$ signals typically called subband signals using a parallel set of bandpass filters. The synthesis bank reconstructs the original data from $q$ subband signals. We are interested in the {\it wavelet filter bank} for which each of analysis and synthesis banks has exactly one lowpass filter and the rest of them are all highpass filters. We recall that a filter $h$ is {\it highpass} if the associated mask is a wavelet mask, i.e. $\sum_{k\in\Zd}h(k)=0$. The filters associated with the combined biorthogonal masks constitute a wavelet filter bank. Furthermore, it is well known that the minimum of accuracy numbers of lowpass filters in a given wavelet filter bank provides a lower bound for the number of vanishing moments of the highpass filters in the given wavelet filter bank \cite{CHR}.

\subsection{Multi-D Wavelet Construction Methods: Tensor Product and Coset Sum}
\label{subS:CosetSum}

When $q=|\det\dil|$ is large, in general, it is not easy to find the combined biorthogonal masks $(\tau,(t_j)_{j=1,\ldots,q-1})$ and $(\tau^\du,(t_j^\du)_{j=1,\ldots,q-1})$. However, if the dilation is dyadic (i.e. $\dil=2{\tt I}_\dm$ and $q=2^\dm$) and the spatial dimension $\dm$ satisfies $n\ge2$, then the well-known tensor product and more recent coset sum can be used. Below we provide a brief review of these methods. 

We recall that the $\dm$-D tensor product mask from $\dm$ (possibly distinct) $1$-D masks $R_1,R_2,\ldots,R_\dm$ is defined as, for $\ome=(\ome_1,\ome_2,\ldots,\ome_\dm)\in\Td$,
\beaN
\mathcal{T}_\dm[R_1,R_2,\ldots,R_\dm](\ome):=R_1(\ome_1)R_2(\ome_2)\cdots R_\dm(\ome_\dm).
\eeaN
Then starting from $1$-D combined biorthogonal masks $(S_0,S_1)$ and $(U_0,U_1)$ with dyadic dilation, one can construct $\dm$-D combined biorthogonal masks with dyadic dilation by setting the $\dm$-D biorthogonal refinement masks as
\beaN
\tau:=\mathcal{T}_\dm[S_0,S_0,\ldots,S_0],\quad \tau^\du:=\mathcal{T}_\dm[U_0,U_0,\ldots,U_0],
\eeaN
and the $\dm$-D wavelet masks $t_\nu$, $t_\nu^\du$, $\nu=(\nu_1,\nu_2,\ldots,\nu_\dm)\in\{0,1\}^\dm\bks0$, as
\beaN
t_\nu=\mathcal{T}_\dm[S_{\nu_1},S_{\nu_2},\ldots,S_{\nu_\dm}],\quad t_\nu^\du=\mathcal{T}_\dm[U_{\nu_1},U_{\nu_2},\ldots,U_{\nu_\dm}].
\eeaN

It is well known that the above tensor product method has many advantages: 1) it preserves the interpolatory property and the accuracy number of $1$-D refinement masks; 2) it also preserves the biorthogonality between two refinement masks; and 3) the resulting separable wavelets are associated with fast algorithms (cf. Section~\ref{subS:algorithms}). However, as discussed in Section \ref{subS:introduction}, the limitations of the separable wavelets constructed from the tensor product are widely known. 

Aa an alternative to the tensor product, a new method called coset sum for constructing $\dm$-D dyadic refinement masks from $1$-D dyadic refinement masks is recently proposed \cite{HZ}. The coset sum refinement mask $\mathcal{C}_\dm[R]$ for a $1$-D dyadic refinement mask $R$ is defined as
\beaN
\mathcal{C}_\dm[R](\ome):={1\over
2^{\dm-1}}\left(1-2^{\dm-1}
+\sum_{\nu\in\{0,1\}^{\dm}\bks0}R(\ome\cdot\nu)\right),\quad\ome\in\Td.
\eeaN
The following results about coset sum refinement masks and coset sum wavelet systems have been proved in \cite{HZ}.

\begin{result}
\label{result:result1}
Let $\mathcal{C}_\dm$ be the coset sum, and let $R$ and $\tilde{R}$ be univariate dyadic refinement masks.
\begin{enumerate}[(a)]
\item $\mathcal{C}_\dm[R]$ is interpolatory if and only if $R$ is interpolatory.
\item Suppose that one of $R$ and $\tilde{R}$ is interpolatory. Then $\mathcal{C}_\dm[R]$ and $\mathcal{C}_\dm[\tilde{R}]$ are biorthogonal if and only if $R$ and $\tilde{R}$ are biorthogonal.
\item Suppose that $R$ is interpolatory. Then $\mathcal{C}_\dm[R]$ and $R$ have the same accuracy number.\qquad\endproof
\end{enumerate}
\end{result}

\begin{result}
\label{result:result2}
Suppose that $S$ and $U$ are 1-D biorthogonal dyadic refinement masks, and that $U$ is interpolatory. Define $\dm$-D biorthogonal refinement masks as
\beaN
\tau:=\mathcal{C}_\dm[S],\quad
\tau^\du:=\mathcal{C}_\dm[U],
\eeaN
and $\dm$-D wavelet masks $t_\nu$, $\nu\in\{0,1\}^{\dm}\bks0$, as
\be
\label{eq:nDwaveletmask}
t_\nu(\ome)=e^{-i\ome\cdot\nu}\overline{U(\ome\cdot\nu+\pi)}, \quad \ome\in\Td.
\ee
Then there exist wavelet masks $t_\nu^\du$, $\nu\in\{0,1\}^{\dm}\bks0$, such that $(\tau,(t_\nu)_{\nu\in\{0,1\}^{\dm}\bks0})$ and $(\tau^\du,(t_\nu^\du)_{\nu\in\{0,1\}^{\dm}\bks0})$ are $\dm$-D combined biorthogonal masks with dyadic dilation.
\qquad\endproof
\end{result}

\medskip
As we can see above, the coset sum and the tensor product method share many useful properties. In addition, the coset sum wavelets can overcome some of the limitations of the separable wavelets. For example, attributed to the smaller {\it supports} (number of nonzero entries) of the resulting multi-D filters, as well as the special structure of the filters, the coset sum can be associated with fast algorithms whose complexity constant does not increase with the spatial dimension. Therefore, in higher dimension, coset sum fast algorithms can be much faster than the tensor product fast algorithms. For more details about the coset sum including its comparison with the tensor product, we refer to \cite{HZ}.

\section{Prime Coset Sum}
\label{section:pcs}

Since coset sum has many useful properties including fast algorithms, which can be much faster than the existing tensor product fast algorithms, in this section, we try to extend the coset sum method to non-dyadic scalar dilations. The following simple lemma plays an important role in our generalization of coset sum.

\begin{lemma}
\label{lemma:prime}
Let $\dm\ge 1$ be a fixed spatial dimension. 
Let $p$ be a prime number, and let $\set$ and $\set^\ast$ be the complete set of representatives of the distinct cosets of $\Zd/p\Zd$ and $2\pi((p^{-1}\Zd)/\Zd)$, respectively, containing $0$. Then for every $\gam\in\set^{\ast}\bks0$, we have
\beaN
\#\{\nu\in\set: \;  \gam\cdot\nu\equiv0\,(\mod 2\pi\ZZ) \}=p^{\dm-1}.
\eeaN
\end{lemma}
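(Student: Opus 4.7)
The plan is to translate the modular condition $\gam\cdot\nu\equiv0\,(\mod 2\pi\ZZ)$ into a linear equation over the finite ring $\ZZ/p\ZZ$ and count the solutions via a rank-nullity argument, with primality entering only through the statement that $\ZZ/p\ZZ$ is a field. First I would verify that the count is independent of the particular choice of the complete sets of representatives $\set$ and $\set^\ast$: if $\nu,\nu'\in\Zd$ represent the same coset modulo $p\Zd$ and $\gam,\gam'\in 2\pi p^{-1}\Zd$ represent the same coset modulo $2\pi\Zd$, then $\gam\cdot\nu\equiv\gam'\cdot\nu'\,(\mod 2\pi\ZZ)$. Hence I may take the canonical choices $\set=\{0,1,\ldots,p-1\}^\dm$ and $\set^\ast=\frac{2\pi}{p}\{0,1,\ldots,p-1\}^\dm$ as mentioned earlier in the paper.

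Writing $\gam=\frac{2\pi}{p}(\gam_1,\ldots,\gam_\dm)$ with $\gam_i\in\{0,\ldots,p-1\}$ and $\nu=(\nu_1,\ldots,\nu_\dm)$ with $\nu_i\in\{0,\ldots,p-1\}$, the condition becomes $\sum_{i=1}^\dm \gam_i\nu_i\equiv 0\,(\mod p)$. Viewing the coordinates as elements of $\ZZ/p\ZZ$, the set to be counted is precisely the kernel of the $\ZZ/p\ZZ$-linear functional $L_\gam\colon(\ZZ/p\ZZ)^\dm\to\ZZ/p\ZZ$ given by $L_\gam(\nu)=\sum_i \gam_i\nu_i$.

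Since $\gam\neq 0$ in $\set^\ast$ corresponds to $(\gam_1,\ldots,\gam_\dm)\neq 0$ in $(\ZZ/p\ZZ)^\dm$, the functional $L_\gam$ is nonzero. This is where primality is essential: $\ZZ/p\ZZ$ is a field, so any nonzero linear functional on $(\ZZ/p\ZZ)^\dm$ is surjective, and rank-nullity yields $\#\ker L_\gam = p^\dm/p = p^{\dm-1}$, as claimed. I do not anticipate a genuine obstacle; the only mildly subtle point is the independence-from-representatives check flagged at the outset. Were $p$ composite, $\ZZ/p\ZZ$ would fail to be a field and one could produce nonzero $\gam$ whose kernel size strictly exceeds $p^{\dm-1}$, which explains why primality appears in the hypothesis.
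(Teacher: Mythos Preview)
Your proof is correct and follows essentially the same approach as the paper: both reduce to the canonical representatives, rewrite the condition as the linear equation $\sum_i \gam_i\nu_i\equiv 0\pmod p$, and exploit that $\ZZ/p\ZZ$ is a field to count solutions. The only cosmetic difference is that you invoke rank--nullity for the functional $L_\gam$, whereas the paper carries out the same count by hand, fixing a nonzero coordinate $\gam_\dm$ and solving uniquely for $\nu_\dm$ via its multiplicative inverse after freely choosing $\nu_1,\ldots,\nu_{\dm-1}$.
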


\begin{remark}
A special case of Lemma~\ref{lemma:prime} for $p=2$ is used for the coset sum (cf. (19) in \cite{HZ}).
\qquad\endproof
\end{remark}

\begin{remark}
\label{remark:prime}
In general, Lemma~\ref{lemma:prime} does not hold true if $p$ is not a prime number. For example, when $p=4$ and $\dm=1$, we can take $\set=\{0,1,2,3\}$ and $\set^{\ast}\bks0=\{{2\pi\over4},{4\pi\over4},{6\pi\over4}\}$. Then, it is easy to see that if $\gam={2\pi\over4}$ or $\gam={6\pi\over4}$, then the cardinality of the set $Z_\gam:=\{\nu\in\set: \;  \gam\cdot\nu\equiv0\,(\mod 2\pi\ZZ) \}$ is 1 (in fact, $Z_\gam=\{0\}$ in both cases), whereas if $\gam={4\pi\over4}$, then $Z_\gam=\{0,2\}$ and hence its cardinality is $2$. As we will see below, in our proof of the lemma, we used crucially the fact that $\ZZ/p\ZZ$ is a finite field for a prime number $p$, which does not hold true anymore if $p$ is not a prime number. 
\qquad\endproof
\end{remark}

\begin{proof}[Proof of Lemma~\ref{lemma:prime}]
First of all, we claim that, without lose of generality, we may assume $\set=\{0,1,\cdots,p-1\}^\dm$ and $\set^\ast={2\pi\over p}\{0,1,\cdots,p-1\}^\dm$. This is because 
for any other $\tilde{\set}$ and $\tilde{\set}^\ast$, there is a one-to-one correspondence between the elements of $\tilde{\set}$ and $\set$, and between the elements of $\tilde{\set}^\ast$ and $\set^\ast$. To be more specific, 
for any other $\tilde{\set}$ and $\tilde{\set}^\ast$, and for any $\tilde{\nu}\in\tilde{\set}$ and $\tilde{\gam}\in\tilde{\set}^\ast\bks 0$, there exist unique $\nu\in\set$ and $\gam\in\set^\ast\bks 0$ such that 
\beaN
\nu\equiv\tilde{\nu} \,(\mod p\Zd), \quad {p\over2\pi}\gam\equiv{p\over2\pi}\tilde{\gam} \, (\mod p\Zd),
\eeaN
and vice versa. Therefore, $\tilde{\gam}\cdot\tilde{\nu} \equiv \gam\cdot\nu \, (\mod 2\pi\ZZ)$. Hence the cardinality of the set $\{\nu\in\set: \;  \gam\cdot\nu\equiv0\,(\mod 2\pi\ZZ) \}$ is the same as the cardinality of the set $\{\tilde{\nu}\in\tilde{\set}: \;  \tilde{\gam}\cdot\tilde{\nu}\equiv0\,(\mod 2\pi\ZZ) \}$.

Now for any $\gam\in\set^\ast\bks0={2\pi\over p}\{0,1,\cdots,p-1\}^\dm\bks0$, and $\nu\in\set=\{0,1,\cdots,p-1\}^\dm$, we let $\mu:={p\over2\pi}\gam$, and let $\mu_i$ and $\nu_i$, $i=1,\ldots,\dm$, be the $i$-th component of $\mu$ and $\nu$. Then both $\mu_i$ and $\nu_i$ lie in the set $\{0,1,\cdots,p-1\}$. Since $\gam\neq0$, at least one of $\mu_i$'s is not 0.  Without loss of generality, we may assume $\mu_\dm\neq0$. Furthermore, $\gam\cdot\nu\equiv0 \,(\mod 2\pi\ZZ)$ if and only if
$\mu_1\nu_1+\cdots+\mu_{\dm}\nu_{\dm}\equiv 0 \,(\mod p\ZZ)$. 

For any $\gam\in\set^\ast\bks0$, and any $\nu_i\in\{0,1,\cdots,p-1\}$, $i=1,\ldots,\dm-1$, let $k\in\{0,1,\cdots,p-1\}$ satisfy 
\beaN
\mu_1\nu_1+\cdots+\mu_{\dm-1}\nu_{\dm-1}\equiv k \,(\mod p\ZZ).
\eeaN
Since $\ZZ/p\ZZ$ is a finite field for a prime number $p$, there exists a unique multiplicative inverse $\rho(\mu_\dm)\in\{1,\cdots,p-1\}$ of $\mu_\dm$ such that $\mu_\dm \rho(\mu_\dm)\equiv1 \,(\mod p\ZZ)$. Then there exists a unique $\nu_\dm\in\{0,1,\cdots,p-1\}$ satisfies
 \beaN
 \nu_\dm\equiv(-k)\rho(\mu_\dm) \,(\mod p\ZZ).
 \eeaN
 Thus 
 \beaN
 \mu_1\nu_1+\cdots+\mu_{\dm-1}\nu_{\dm-1}+\mu_\dm\nu_\dm
 \equiv k+\mu_\dm(-k)\rho(\mu_\dm)
 \equiv 0 \, (\mod p\ZZ).
 \eeaN
 Since there are $p^{\dm-1}$ different choices for $\nu_1,\nu_2,\cdots,\nu_{\dm-1}$, for any $\gam\in\set^\ast\bks0$, we have
\beaN
\#\{\nu\in\set: \;  \gam\cdot\nu\equiv0 \,(\mod 2\pi\ZZ) \}=p^{\dm-1}.
\eeaN
\end{proof}

\medskip
With Lemma~\ref{lemma:prime} in hand,  we define a particular generalization of coset sum for the prime dilation $\dil=p{\tt I}_\dm$, where $p\ge 2$ is a prime number. Let $\set$ and $\set^\ast$ be defined as in Lemma~\ref{lemma:prime}. For example, $\set=\{0,1,\cdots,p-1\}^{\dm}$ and $\set^\ast={2\pi\over p}\{0,1,\cdots,p-1\}^\dm$ can be used. 

Motivated by the definition of the original coset sum $\mathcal{C}_\dm$ (cf. Section~\ref{subS:CosetSum}), we consider a generalized coset sum $\GC$ of the form
\beaN
\GC[R](\ome)=A\left(B
+\sum_{\nu\in\set'}R(\ome\cdot\nu)\right),
\eeaN
where $\set':=\set\bks0$, and $A$ and $B$ are constants that will be determined soon. 
To pin down the constants $A$ and $B$, we impose two conditions that we consider {\it natural} on the map $\GC$. Firstly, we require $\GC$ to map a 1-D refinement mask with dilation $p$ to an $\dm$-D refinement mask with dilation $p{\tt I}_\dm$. That is, we want $\GC[R](0)=1$ whenever $R(0)=1$. From this we get the equation
\be
\label{eqn:1}
B+p^\dm-1={1\over A}.
\ee
Secondly, we require the accuracy number of $\GC[R]$ to be at least one whenever the accuracy number of the 1-D refinement mask $R$ is at least one. That is, we want, for any $\gam\in\set^\ast\bks0$, 
\beaN
0=\GC[R](\gam)=A\Bigg(B+\sum_{\{\nu\in\set', \gam\cdot\nu\equiv0\}}R(0)\Bigg)=A\Bigg(B+p^{\dm-1}-1\Bigg),
\eeaN
where the last equality is due to Lemma~\ref{lemma:prime}. This gives the equation
\be
\label{eqn:2}
B+(p^{\dm-1}-1)=0.
\ee
By solving $A$ and $B$ that satisfy (\ref{eqn:1}) and (\ref{eqn:2}) simultaneously, we reach the following definition of a generalized coset sum for prime dilations.

\begin{definition}
\label{def:primecosetsum}
Let $p$ be a prime number. We define the {\it prime coset sum}
$\GC$ that maps a 1-D refinement mask
$R$ with dilation $p$ to an $\dm$-D refinement mask $\GC[R]$ with dilation $p{\tt I}_\dm$ as follows:
for any $\ome\in\Td$,
\beaN
\GC[R](\ome):={1\over
(p-1)p^{\dm-1}}\left(1-p^{\dm-1}
+\sum_{\nu\in\set'}R(\ome\cdot\nu)\right),
\eeaN
where $\set'=\set\bks0$.
\qquad\endproof
\end{definition}

\begin{remark}
We refer to the refinement mask obtained by $\GC$ as the {\it prime coset sum refinement mask}. We notice that the prime coset sum $\GC$ with $p=2$ reduces to the original coset sum $\mathcal{C}_\dm$ for dyadic dilation, i.e. $\mathcal{C}_{\dm,2}=\mathcal{C}_\dm$ (cf. Section~\ref{subS:CosetSum} for the choice of $\set=\{0,1\}^\dm$ and \cite{HZ} for more general choice of $\set$). 
\qquad\endproof
\end{remark}

\medskip
Let $H$ be the 1-D lowpass filter associated with the 1-D refinement mask $R$. Let $h$ be the $\dm$-D lowpass filter associated with the $\dm$-D refinement mask $\GC[R]$. We refer to such a filter $h$ as the {\it prime coset sum lowpass filter}. For any nonzero $k\in\Zd$, we define a set $W_k$ as $W_k:=\{l\in\ZZ\bks0: k=l\nu \hbox{ for some } \nu\in\set'\}$. Then the $\dm$-D prime coset sum lowpass filter $h$ can be written in terms of the 1-D lowpass filter $H$ as follows: 
\be
\label{eq:filter}
h(k)=
\cases{
             {\disp1\over p-1}(p-p^\dm+(p^\dm-1)H(0)),&\mbox{if $k=0$},\cr
{\disp1\over p-1}\sum_{l\in W_k}H(l),&\mbox{if $k\ne 0$}.\cr
}
\ee

Now we give a simple example to show the construction of multi-D prime coset sum lowpass filters.

\begin{example}[\bf Centered $2$-D Haar lowpass filter with dilation $3$]
\label{example:1}
Consider the centered $1$-D Haar lowpass filter with dilation $3$:
$$
H(K)=
\cases{
             1, \quad\mbox {if $K=0$ or $K=\pm 1$},\cr
             0, \quad\mbox{otherwise}.\cr
    }
$$
Let us take $\set=\{-1,0,1\}^2=\{(0,0),\pm(1,0),\pm(0,1),\pm(1,1),\pm(1,-1)\}$. Then it is easy to check that the $2$-D prime coset sum lowpass filter constructed from the $1$-D centered Haar is
$$
h(k)=
\cases{
            1, \quad\mbox {if $k=(0,0)$, $k=\pm (1,0)$, $k=\pm (0,1)$, $k=\pm (1,-1)$ or $k=\pm (-1,1)$},\cr
            0, \quad\mbox{otherwise}.\cr
    }
$$
Figure \ref{figure:haar} shows the $1$-D filter $H$ and the resulting $2$-D filter $h$. 
\qquad\endproof
\end{example}

\begin{figure}[t]
\centering
1 \quad {\bf1} \quad 1 \quad
$\longrightarrow$~~~
\begin{tabular}{ccc}
1                          & 1& 1\\[3pt]
1 &            {\bf 1}              & 1\\[3pt]
1&1&   1                     \\[3pt]
\end{tabular}
\caption[]{Construction of centered $2$-D Haar lowpass filter with dilation $3$ using prime coset sum (cf.~Example \ref{example:1}) \footnotemark}
\label{figure:haar}
\end{figure}

\footnotetext{Bold-faced number indicates that it is at the origin. This figure is also given out in \cite{HZAsilomar}. }

\medskip
Some of the properties of the original coset sum (cf. Section~\ref{subS:CosetSum}) still hold true for the generalized prime coset sum. 

\begin{lemma}
\label{lemma:interpolatory}
Let $\GC$ be the prime coset sum, and $R$ be a univariate refinement mask with dilation $p$. If $R$ is interpolatory, then $\GC[R]$ is interpolatory.
\end{lemma}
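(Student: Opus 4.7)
The plan is to verify the interpolatory condition directly:
$$\sum_{\gamma \in \set^*} \GC[R](\omega + \gamma) = 1, \qquad \forall \omega \in \Td.$$
Substituting the definition of $\GC[R]$, using $\#\set^* = p^\dm$ and swapping the order of summation gives
$$\sum_{\gamma\in\set^*}\GC[R](\omega+\gamma) = \frac{1}{(p-1)p^{\dm-1}}\left(p^\dm(1-p^{\dm-1}) + \sum_{\nu\in\set'}\sum_{\gamma\in\set^*}R(\omega\cdot\nu+\gamma\cdot\nu)\right).$$
So the task reduces to evaluating the inner sum $\sum_{\gamma\in\set^*}R(\omega\cdot\nu+\gamma\cdot\nu)$ for fixed $\nu\in\set'$.

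The key step is a counting argument analogous to Lemma~\ref{lemma:prime}, but with the roles of $\nu$ and $\gamma$ swapped and for every residue class (not just $0$). Concretely, writing $\gamma=\frac{2\pi}{p}\mu$ with $\mu\in\{0,1,\ldots,p-1\}^\dm$, I would show that for each fixed $\nu\in\set'$ and each $k\in\{0,1,\ldots,p-1\}$,
$$\#\{\mu\in\{0,1,\ldots,p-1\}^\dm : \mu\cdot\nu \equiv k \,(\mod p\ZZ)\} = p^{\dm-1}.$$
This follows from exactly the argument in the proof of Lemma~\ref{lemma:prime}: pick any component $\nu_i$ that is nonzero mod $p$ (which exists because $\nu\ne0$ and components already lie in $\{0,\ldots,p-1\}$), invert it in the field $\ZZ/p\ZZ$, and for each of the $p^{\dm-1}$ choices of the remaining components of $\mu$ there is a unique $\mu_i$ producing the prescribed residue $k$. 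Hence as $\gamma$ ranges over $\set^*$, the value $\gamma\cdot\nu$ hits each element of $\frac{2\pi}{p}\{0,1,\ldots,p-1\}$ exactly $p^{\dm-1}$ times modulo $2\pi$.

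Once this is established, the interpolatory property of $R$ yields
$$\sum_{\gamma\in\set^*}R(\omega\cdot\nu+\gamma\cdot\nu) = p^{\dm-1}\sum_{j=0}^{p-1}R\Big(\omega\cdot\nu+\tfrac{2\pi j}{p}\Big) = p^{\dm-1}.$$
Plugging this back and summing over the $p^\dm-1$ elements of $\set'$, the numerator becomes
$$p^\dm(1-p^{\dm-1}) + (p^\dm-1)\,p^{\dm-1} = p^\dm - p^{\dm-1} = (p-1)p^{\dm-1},$$
so the total equals $1$, as required. The only nontrivial step is the equidistribution count above, and even that is a straightforward adaptation of the finite-field argument already carried out for Lemma~\ref{lemma:prime}; the rest is arithmetic. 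This is also where the primality of $p$ enters, via the existence of the multiplicative inverse in $\ZZ/p\ZZ$.
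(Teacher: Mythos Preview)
Your proof is correct but proceeds along a different route from the paper's. The paper works entirely in the filter (spatial) domain: it uses the explicit formula~(\ref{eq:filter}) for the $\dm$-D filter $h$ in terms of the $1$-D filter $H$, and checks the interpolatory condition~(\ref{eq:interpolatory}) for $h$ by observing that for $k\in p\Zd\setminus 0$ every $l\in W_k$ lies in $p\ZZ\setminus 0$, where $H$ vanishes. Your argument instead stays in the mask (frequency) domain and verifies $\sum_{\gam\in\set^*}\GC[R](\ome+\gam)=1$ directly, via an equidistribution count that extends Lemma~\ref{lemma:prime} from the zero residue class to all residue classes. The paper's approach is shorter once formula~(\ref{eq:filter}) is in hand and avoids proving a new counting statement; your approach is more self-contained (it does not rely on~(\ref{eq:filter})) and makes the role of primality via the finite-field inversion more transparent. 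Both arguments are of comparable difficulty; the only nontrivial input in each case is ultimately the invertibility of nonzero elements in $\ZZ/p\ZZ$.
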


\begin{proof}
See Appendix~\ref{subS:proofoflemmainterpolatory}. 
\end{proof}

\begin{lemma}
\label{lemma:accuracy}
Let $\GC$ be the prime coset sum, $R$ be a univariate refinement mask with dilation $p$, and let $m_1$ and $m_2$ be positive integers. Suppose that $R$ has $m_1$ accuracy and $m_2$ flatness. Then $\GC[R]$ has at least $\min\{m_1,m_2\}$ accuracy.
\end{lemma}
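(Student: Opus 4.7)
The plan is to verify directly that every derivative of $\GC[R]$ of order strictly less than $m:=\min\{m_1,m_2\}$ vanishes at each $\gam\in\set^\ast\bks0$. As in the proof of Lemma~\ref{lemma:prime}, I would first reduce to the canonical choice $\set=\{0,1,\ldots,p-1\}^\dm$ and $\set^\ast=\frac{2\pi}{p}\{0,1,\ldots,p-1\}^\dm$ (the general case follows because the membership of $\gam\cdot\nu$ in $2\pi\ZZ$ depends only on the cosets).

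Next I would Taylor-expand around $\gam$. Writing $\ome=\gam+\eta$, the definition gives
\[
(p-1)p^{\dm-1}\,\GC[R](\gam+\eta)=1-p^{\dm-1}+\sum_{\nu\in\set'}R(\gam\cdot\nu+\eta\cdot\nu),
\]
and I would split the sum into two groups according to whether $\gam\cdot\nu\equiv 0\,(\mod 2\pi\ZZ)$ or not. This is precisely the dichotomy controlled by Lemma~\ref{lemma:prime}: the first group contains $p^{\dm-1}-1$ terms and the second contains $p^\dm-p^{\dm-1}$ terms.

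For the second group, $\gam\cdot\nu$ (mod $2\pi$) is a nonzero element of $\frac{2\pi}{p}\{0,\ldots,p-1\}$, hence lies in the 1-D version of $\set^\ast\bks0$. By the $m_1$-accuracy of $R$ together with $R$'s $2\pi$-periodicity, $R(\gam\cdot\nu+\eta\cdot\nu)=O(|\eta\cdot\nu|^{m_1})=O(|\eta|^{m_1})$. For the first group, periodicity gives $R(\gam\cdot\nu+\eta\cdot\nu)=R(\eta\cdot\nu)$, and the $m_2$-flatness of $R$ (zero of $1-R$ of order $m_2$ at $0$) gives $R(\eta\cdot\nu)=1+O(|\eta|^{m_2})$. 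Summing, the constant terms are
\[
1-p^{\dm-1}+(p^{\dm-1}-1)\cdot 1+0=0,
\]
and the remainder is $O(|\eta|^{\min\{m_1,m_2\}})$. Thus $\GC[R]$ has a zero of order at least $\min\{m_1,m_2\}$ at $\gam$, which is exactly the claimed accuracy bound.

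I do not expect a real obstacle here; the only subtlety is making sure that the cancellation of the constant terms uses exactly the count $p^{\dm-1}-1$ supplied by Lemma~\ref{lemma:prime}, and that the flatness hypothesis on $R$ (rather than accuracy) is what kills the first group of terms. This is why the two numbers $m_1$ and $m_2$ appear separately and why only their minimum survives.
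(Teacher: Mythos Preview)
Your proposal is correct and follows essentially the same approach as the paper's proof. The only cosmetic difference is that the paper unifies your two groups into a single observation---namely that $(D^kR)(\xi)=0$ for every $\xi$ in the 1-D set $F_p^\ast$ (including $\xi=0$) and every $1\le k\le\min\{m_1,m_2\}-1$---and then applies the chain rule once, whereas you keep the flatness and accuracy contributions separate via the split governed by Lemma~\ref{lemma:prime}; the underlying computation is identical.
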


\begin{proof}
See Appendix~\ref{subS:proofoflemmaaccuracy}. Similar arguments to the ones given in \cite{HZ} are used in our proof.
\end{proof}

\begin{remark}
If $R$ is interpolatory, then $m_1=m_2$. Hence, the above lemma says that, when $R$ is interpolatory, the accuracy number of $\GC[R]$ is at least as much as the accuracy number of $R$. For the case of the original coset sum with dyadic dilation, the accuracy number of $\mathcal{C}_\dm[R]$ is exactly the same as the accuracy number of $R$ when $R$ is interpolatory (cf. Result~\ref{result:result1}(c)). We do not yet know whether this result would hold true for the prime coset sum in general.  
\qquad\endproof
\end{remark}

\begin{lemma}
\label{lemma:flatness}
Let $\GC$ be the prime coset sum, and $R$ be a univariate refinement mask with dilation $p$. Then the flatness number of $\GC[R]$ is at least the flatness number of $R$.
\end{lemma}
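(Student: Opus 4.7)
The plan is to compute $1-\GC[R](\ome)$ explicitly and show that it is a sum of terms each of which vanishes to the required order at the origin. First I would substitute the definition of $\GC[R]$ and simplify:
\beaN
1-\GC[R](\ome)
&=& 1-{1\over(p-1)p^{\dm-1}}\Bigg(1-p^{\dm-1}+\sum_{\nu\in\set'}R(\ome\cdot\nu)\Bigg)\\
&=&{1\over(p-1)p^{\dm-1}}\Bigg((p-1)p^{\dm-1}+p^{\dm-1}-1-\sum_{\nu\in\set'}R(\ome\cdot\nu)\Bigg)\\
&=&{1\over(p-1)p^{\dm-1}}\Bigg((p^\dm-1)-\sum_{\nu\in\set'}R(\ome\cdot\nu)\Bigg).
\eeaN
Since $|\set'|=p^\dm-1$, the constant $p^\dm-1$ can be absorbed into the sum, yielding the key identity
\beaN
1-\GC[R](\ome)={1\over(p-1)p^{\dm-1}}\sum_{\nu\in\set'}\bigl(1-R(\ome\cdot\nu)\bigr).
\eeaN

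With this identity in hand, the rest is immediate. Let $m$ denote the flatness number of $R$, so that $1-R(\eta)=O(|\eta|^m)$ as $\eta\to 0$. For each fixed $\nu\in\set'$, the map $\ome\mapsto\ome\cdot\nu$ is a linear form in $\ome\in\Td$, hence $|\ome\cdot\nu|\le|\nu|\,|\ome|$ and consequently $1-R(\ome\cdot\nu)=O(|\ome|^m)$ as $\ome\to 0$. Summing the $p^\dm-1$ such terms preserves the order estimate, so $1-\GC[R](\ome)=O(|\ome|^m)$ as $\ome\to 0$, which is exactly the statement that the flatness number of $\GC[R]$ is at least $m$.

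I expect no real obstacle here; the only thing to be careful about is the constant bookkeeping in the simplification of $1-\GC[R]$ and the observation that $|\set'|=p^\dm-1$ lets the free constant be absorbed into the sum. Unlike the accuracy and interpolatory lemmas, no appeal to Lemma~\ref{lemma:prime} is needed, because the flatness condition is about the behavior at a single point (the origin) rather than at all $\gam\in\set^\ast\bks0$.
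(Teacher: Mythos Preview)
Your proof is correct and is essentially the approach the paper has in mind: the paper omits the argument, calling it ``a simple variant'' of the accuracy proof, and that variant amounts to differentiating $\GC[R]$ at the origin and using $(D^{|\mu|}R)(0)=0$ for $1\le|\mu|\le m-1$---precisely what your identity $1-\GC[R](\ome)=\frac{1}{(p-1)p^{\dm-1}}\sum_{\nu\in\set'}\bigl(1-R(\ome\cdot\nu)\bigr)$ together with the $O(|\ome|^m)$ estimate encodes. Your observation that Lemma~\ref{lemma:prime} is not needed here (because only the point $\ome=0$ is involved) is also correct.
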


We omit the proof of Lemma~\ref{lemma:flatness} as it is a simple variant of our proof of Lemma~\ref{lemma:accuracy}. 

\medskip
Unlike the original coset sum with dyadic dilation (cf. Result~\ref{result:result1}(b)), in general, the prime coset sum does not preserve the biorthogonality of 1-D refinement masks when $p>2$, even if one of them is interpolatory. Let us look at two examples to this end. Both of them are related with the Haar refinement masks with dilation $3$.

\begin{example}[\bf Centered $2$-D Haar refinement mask with dilation $3$]
\label{example:2}
Let us consider the centered 1-D Haar refinement mask as in Example \ref{example:1}: 
$$\disp{1\over 3}\left(e^{i\ome}+1+e^{-i\ome}\right).$$
Then the above mask has dilation $3$ and it is associated with the refinable function $\phi=\chi_{[-1/2,1/2]}$. If we define both $R$ and $\tilde{R}$ to be this centered $1$-D Haar refinement mask with dilation $3$, then they are interpolatory and biorthogonal with one accuracy.

Let us now take $\set=\{-1,0,1\}^2=\{(0,0),\pm(1,0),\pm(0,1),\pm(1,1),\pm(1,-1)\}$. Then, it is easy to see that transforming $R$ and $\tilde{R}$ to $2$-D using the prime coset sum with $p=3$ produces two $2$-D refinement masks $\mathcal{C}_{2,3}[R]$ and $\mathcal{C}_{2,3}[\tilde{R}]$ (cf. Figure \ref{figure:haar}) that are not only interpolatory with one accuracy, but also biorthogonal.
\qquad\endproof
\end{example}

\begin{example}[\bf Non-centered $2$-D Haar refinement mask with dilation $3$]
\label{example:3}
Now let us consider the non-centered $1$-D Haar refinement mask with dilation $3$:
\beaN 
\disp{1\over 3}\left(1+e^{-i\ome}+e^{-2i\ome}\right),
\eeaN
that is associated with the refinable function $\phi=\chi_{[0,1]}$, where $\chi_{[0,1]}$ is the characteristic function on $[0,1]$. Let both $R$ and $\tilde{R}$ be the above non-centered $1$-D Haar refinement mask with dilation $3$. Then it is easy to see that $R$ and $\tilde{R}$ are interpolatory and biorthogonal, and they have one accuracy. 

We use $\set=\{0,1,2\}^2=\{(0,0),(0,1),(0,2),(1,0),(1,1),(1,2),(2,0),(2,1),(2,2)\}$ this time. By transforming $R$ and $\tilde{R}$ to $2$-D masks using the prime coset sum with $p=3$, we see that $\mathcal{C}_{2,3}[R]$ and $\mathcal{C}_{2,3}[\tilde{R}]$ are still interpolatory and they still have one accuracy, but that they are no longer biorthogonal. 
\qquad\endproof
\end{example}

\section{Multi-D Wavelet Filter Banks with Fast Algorithms}
\label{section:FBs}
\subsection{Theory}
\label{subS:theory}

Suppose that $S$ and $U$ are 1-D biorthogonal refinement masks with dilation $p$, and that $U$ is interpolatory. Since the $\dm$-D prime coset sum refinement masks $\GC[S]$ and $\GC[U]$ are not necessarily biorthogonal (cf. Example \ref{example:3} in Section~\ref{section:pcs}), it is not trivial to construct wavelet filter banks from $\GC[S]$ and $\GC[U]$ directly. We propose to use a recent method developed by the first author \cite{Hur}. This method can construct wavelet filter banks from two refinement masks that are not necessarily biorthogonal, as long as one of them is interpolatory. 
Noting that $\GC[U]$ is interpolatory (cf. Lemma~\ref{lemma:interpolatory}), we apply this method to $\GC[S]$ and $\GC[U]$ to construct wavelet filter banks. As we will see later (cf. Section \ref{subS:algorithms}), similar to the coset sum case, the resulting wavelet filter banks using this method can be associated with fast algorithms, that are faster than the tensor product fast algorithms.

Since the method in \cite{Hur} works for any dilation matrix $\dil$, below we present it for the general dilation matrix $\dil$ with $q=|\det\dil|$. Let $\set$ and $\set^\ast$ be the complete set of representatives of the distinct cosets of $\Zd/\dil\Zd$ and $2\pi(((\dil^\ast)^{-1}\Zd)/\Zd)$, respectively, containing $0$. The following result is from \cite{Hur} written in terms of our notation. 

\begin{result}
\label{result:result3}
Suppose $g$ and $h$ are two $\dm$-D lowpass filters with dilation $\dil$, and $h$ is interpolatory. Then the two $\dm$-D refinement masks defined as
\beaN
\tau(\ome):=\hatg(\ome)+\Big(1-\sum_{\gam\in\set^\ast}\hatg(\ome+\gam)\overline{\hath(\ome+\gam)}\Big),\quad \tau^\du(\ome):=\hath(\ome),
\eeaN
for every $\ome\in\Td$, and the $\dm$-D wavelet masks defined as
\beaN
t_\nu(\ome):=e^{-i\ome\cdot\nu}-q\;\overline{(h(\nu+\dil\cdot))\widehat{\phantom{x}}(\dil^\ast\ome)},
\eeaN
and 
\beaN
t_\nu^\du(\ome):={1\over q}\;e^{-i\ome\cdot\nu}-\overline{(g(\nu+\dil\cdot))\widehat{\phantom{x}}(\dil^\ast\ome)}\;\hath(\ome),
\eeaN
for every $\ome\in\Td$, and $\nu\in\set'=\set\bks0$, form the combined biorthogonal masks (cf. (\ref{eq:muep})). 
\qquad\endproof
\end{result}

\begin{proof} 
Result~\ref{result:result3} is proved in \cite{Hur}, but under slightly different settings. For completeness, we provide an alternative proof that does not rely on the results of \cite{Hur}. Our proof is placed in Appendix~\ref{subS:proofofresult3}.
\end{proof}

\begin{remark}
\label{remark:vm}
In fact, the results in \cite{Hur} say that, if we assume that, in addition to the assumptions of Result~\ref{result:result3}, $h$ has $\alpha_1$ accuracy, $g$ has $\alpha_2$ accuracy, and $\alpha_3$ flatness, then $\tau$ has at least $\min\{\alpha_1,\alpha_2,\alpha_3\}$ accuracy. In such a case, $t_\nu$ and $t_\nu^\du$, $\nu\in\set'$, have at least $\min\{\alpha_1,\alpha_2,\alpha_3\}$ vanishing moments (cf. Section~\ref{subS:notation}). 
\qquad\endproof
\end{remark}

\medskip
For the rest of this section, we assume that the dilation is prime, i.e. $\dil=p{\tt I}_\dm$, and that the sets $\set$ and $\set^\ast$ are associated with the prime dilation, i.e., $\set$ and $\set^\ast$ are the complete set of representatives of the distinct cosets of $\Zd/p\Zd$ and $2\pi((p^{-1}\Zd)/\Zd)$, respectively, containing $0$. In particular, we have $q=|\det\dil|=p^\dm$ in this case.

Before presenting our main theorem, let us first define a map 
\beaN
\eta:\Fp' \times \set'\to\set',
\eeaN
with $\Fp':=\Fp\bks0$, where $\Fp$ is a complete set of representatives of the distinct cosets of $\ZZ/p\ZZ$ that contains 0. For example, the set $\{0,1,\cdots,p-1\}$ can be used for $\Fp$. Let $(l,\nu)\in \Fp' \times \set'\subset \ZZ\times \Zd$. Then there exists the unique multiplicative inverse $\rho(l)\in \Fp'$ of $l$ (cf. Remark \ref{remark:prime} in Section~\ref{section:pcs}). After computing the multiplication $\rho(l)\nu$ in the usual sense, we define $\eta(l,\nu)$ to be the element in $\set'=\set\bks0$ so that 
\beaN
\eta(l,\nu)\equiv \rho(l)\nu \,(\mod p\ZZ^\dm).
\eeaN 
By the above conditions, $\eta(l,\nu)$ is uniquely well defined as an element in $\set'$ since $\rho(l)\nu$ is in $\Zd$ but not in $p\Zd$. For example, if $\dm=2$, $p=3$, $\Fp=\{0,1,2\}$ and $\set=\{0,1,2\}^2$, then $\eta(2,(1,1))=(2,2)$ and $\eta(2,(2,2))=(1,1)$.

Now we are ready to present our result.
\begin{theorem}
\label{thm:wavelet}
Suppose that $G$ and $H$ are two 1-D lowpass filters with dilation $p$, and that $H$ is interpolatory. Let $S:=\hatG$ and $U:=\hatH$ be the 1-D refinement masks associated with $G$ and $H$, and let $\GC$ be the prime coset sum.  Define $\dm$-D biorthogonal refinement masks as
\beaN
\tau(\ome):=\GC[S](\ome)+\left(1-\sum_{\gam\in\set^\ast}\GC[S](\ome+\gam)\overline{\GC[U](\ome+\gam)}\right), 
\tau^\du(\ome):=\GC[U](\ome),
\eeaN
for every $\ome\in\Td$, and $\dm$-D wavelet masks as
\be
\label{eq:tnu}
t_\nu(\ome):=e^{-i\ome\cdot\nu}\left(1-{p\over p-1}\sum_{l\in \Fp'}e^{i(\ome\cdot \eta(l,\nu))l} \;\overline{U_l\Big(p\ome\cdot \eta(l,\nu)\Big)}\right),\quad \nu\in\set'
\ee
and
\bea
\label{eq:tnudu}
t_\nu^\du(\ome):={1\over p^\dm \;}e^{-i\ome\cdot\nu}\left(1-{p\over p-1} \sum_{l\in\Fp'}e^{i(\ome\cdot\eta(l,\nu)) l }\; \overline{S_l\Big(p\ome\cdot\eta(l,\nu)\Big)}\;\tau^\du(\ome) \right),
\eea
for $\nu\in\set'$, and for every $\ome\in\Td$, where $U_l(\xi):=(H(l+p\cdot))\widehat{\phantom{x}}(\xi)$, and $S_l(\xi):=(G(l+p\cdot))\widehat{\phantom{x}}(\xi)$, $\xi\in\TT$.\footnote{$U_l$ and $S_l$ can be interpreted as the polyphase decomposition of filter $H$ and  $G$, respectively (cf. Appendix~\ref{subS:ECLP}).}
Then $(\tau, (t_\nu)_{\nu\in\set'})$ and $(\tau^\du, (t_\nu^\du)_{\nu\in\set'})$ form $\dm$-D combined biorthogonal masks. 
\end{theorem}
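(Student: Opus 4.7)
The plan is to apply Result~\ref{result:result3} to the $\dm$-D lowpass filters $g$ and $h$ corresponding to the prime coset sum refinement masks $\GC[S]$ and $\GC[U]$ respectively. Since $U$ is interpolatory by hypothesis, Lemma~\ref{lemma:interpolatory} ensures $\GC[U]$ is interpolatory, so $h$ is interpolatory and the hypothesis of Result~\ref{result:result3} is satisfied. The stated refinement masks $\tau$ and $\tau^\du$ in Theorem~\ref{thm:wavelet} are then immediately the ones produced by Result~\ref{result:result3}, so no work is required there.

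The substantive task is to verify that the wavelet masks (\ref{eq:tnu}) and (\ref{eq:tnudu}) agree with the corresponding expressions from Result~\ref{result:result3}, namely
\beaN
t_\nu(\ome)=e^{-i\ome\cdot\nu}-p^\dm\,\overline{(h(\nu+p\cdot))\widehat{\phantom{x}}(p\ome)},
\eeaN
and analogously for $t_\nu^\du$. The key identity to establish is
\beaN
p^\dm\,(h(\nu+p\cdot))\widehat{\phantom{x}}(p\ome)=\frac{p}{p-1}\,e^{i\ome\cdot\nu}\sum_{l\in\Fp'}e^{-i(\ome\cdot\eta(l,\nu))l}\,U_l\bigl(p\,\ome\cdot\eta(l,\nu)\bigr),
\eeaN
after which $t_\nu$ in (\ref{eq:tnu}) follows by conjugation and factoring out $e^{-i\ome\cdot\nu}$.

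To prove this identity, I will expand the left-hand side directly using the closed form (\ref{eq:filter}) for the prime coset sum filter $h$. Since $\nu\in\set'$, the index $\nu+pk$ is never $0$, so
\beaN
p^\dm\,(h(\nu+p\cdot))\widehat{\phantom{x}}(p\ome)=\sum_{k\in\Zd}h(\nu+pk)e^{-ipk\cdot\ome}=\frac{1}{p-1}\sum_{l\neq 0,\,\mu\in\set':\,l\mu\equiv\nu\,(\mod p\Zd)}H(l)\,e^{-i(l\mu-\nu)\cdot\ome}.
\eeaN
Now comes the main bookkeeping step, which is the only genuinely nontrivial point: parameterize the pairs $(l,\mu)$. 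Writing $l=pj+l'$ with $l'\in\Fp$, the congruence $l\mu\equiv\nu\,(\mod p\Zd)$ forces $l'\in\Fp'$ (since $\mu$ and $\nu$ are nonzero mod $p$), and then $\mu$ is uniquely determined as $\mu=\eta(l',\nu)$ by the very definition of the map $\eta$ (this is where primality of $p$ enters, via the multiplicative inverse $\rho(l')$ used in Lemma~\ref{lemma:prime} and Remark~\ref{remark:prime}). Substituting $l=pj+l'$, $\mu=\eta(l',\nu)$ and splitting the sum gives
\beaN
\frac{1}{p-1}\,e^{i\ome\cdot\nu}\sum_{l'\in\Fp'}e^{-il'(\ome\cdot\eta(l',\nu))}\sum_{j\in\ZZ}H(pj+l')\,e^{-ipj(\ome\cdot\eta(l',\nu))},
\eeaN
and the inner sum over $j$ is precisely $p\,U_{l'}\bigl(p\,\ome\cdot\eta(l',\nu)\bigr)$ by the 1-D polyphase definition $U_l(\xi)=(H(l+p\cdot))\widehat{\phantom{x}}(\xi)$. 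This yields the identity.

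The argument for $t_\nu^\du$ is identical with $g$ and $S$ in place of $h$ and $U$; the extra factor $\tau^\du(\ome)$ in (\ref{eq:tnudu}) comes from the $\hath(\ome)$ appearing in the corresponding formula of Result~\ref{result:result3}, and the overall $1/p^\dm$ factor matches $1/q$. The main obstacle, and really the only one, is the combinatorial parameterization in the middle paragraph: recognizing that each triple $(k,l,\mu)$ contributing to $h(\nu+pk)$ is indexed bijectively by a pair $(l',j)\in\Fp'\times\ZZ$ via the prime-field inversion $\eta$. Once this correspondence is in place, the rest is a clean application of Result~\ref{result:result3} and the definition of the polyphase components.
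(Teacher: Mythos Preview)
Your proposal is correct and follows the same overall architecture as the paper: apply Result~\ref{result:result3} to the prime coset sum filters (using Lemma~\ref{lemma:interpolatory} to verify the interpolatory hypothesis), observe that $\tau$ and $\tau^\du$ match immediately, and then reduce the verification of (\ref{eq:tnu}) and (\ref{eq:tnudu}) to a single polyphase identity relating $(h(\nu+p\cdot))\widehat{\phantom{x}}(p\ome)$ to the 1-D polyphase components $U_l$.

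The one noteworthy difference is how you establish that identity. The paper isolates it as Lemma~\ref{lemma:polyconnect} and proves it in the Fourier domain: it expands $\hath$ via the prime coset sum definition and the 1-D polyphase formula, then recovers the $\nu$-th polyphase component of $h$ from $\hath$ using the inversion formula $(h(\nu+p\cdot))\widehat{\phantom{x}}(p\ome)=p^{-\dm}\sum_{\gam\in\set^\ast}e^{i(\ome+\gam)\cdot\nu}\hath(\ome+\gam)$ together with the character sum $\sum_{\gam\in\set^\ast}e^{i\gam\cdot(\nu-\tilde\nu l)}=p^\dm\delta_{\tilde\nu,\eta(l,\nu)}$. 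You instead work entirely in the spatial domain, starting from the explicit filter formula (\ref{eq:filter}) and reindexing the sum over $W_{\nu+pk}$ by the bijection $(l',j)\in\Fp'\times\ZZ\leftrightarrow(l,\mu)$ with $l=pj+l'$ and $\mu=\eta(l',\nu)$. Both arguments hinge on exactly the same algebraic fact---invertibility in $\ZZ/p\ZZ$ forces $\mu=\eta(l',\nu)$---but your route is arguably more elementary, avoiding the auxiliary Fourier inversion and character-sum machinery. The paper's route, on the other hand, packages the computation in a reusable lemma and makes the polyphase-decomposition viewpoint more explicit.
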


\begin{remark}
 In the dyadic setting, i.e., when $p=2$, one can take $F_2=\{0,1\}$ and $\set=\{0,1\}^\dm$. Then, since $1$ is the only element in $F_2'$ and $\eta(1,\nu)=\nu$ for all $\nu\in\{0,1\}^\dm\bks0$, the $\dm$-D wavelet masks in (\ref{eq:tnu}) become
\beaN
t_\nu(\ome)&{\,=\,}&e^{-i\ome\cdot\nu}-2\;\overline{U_1\Big(2\ome\cdot\nu\Big)}\\
&=&e^{-i\ome\cdot\nu}-2\;\overline{e^{i\ome\cdot\nu}\Big(U(\ome\cdot\nu)-{1\over 2}\Big)}
=e^{-i\ome\cdot\nu}-e^{-i\ome\cdot\nu}\Big(1-2\;\overline{U(\ome\cdot\nu+\pi)}\Big)\\
&=&2e^{-i\ome\cdot\nu}\;\overline{U(\ome\cdot\nu+\pi)}, \quad\nu\in\{0,1\}^\dm\bks0,
\eeaN
where the second identity is from the definition of $U_1$ and the third identity is from the fact that $U$ is interpolatory. The above wavelet masks are the same as the wavelet masks in the coset sum wavelet system (cf. (\ref{eq:nDwaveletmask}) in Result~\ref{result:result2}) up to a normalization factor. In fact, the exact forms of $t_\nu^\du$ for coset sum wavelet system are also provided in \cite{HZ}, and similar calculation shows that they are the same as $t_\nu^\du$ in (\ref{eq:tnudu}) up to a normalization factor when $p=2$.
Hence we conclude that Theorem~\ref{thm:wavelet} reduces to the known result of the original coset sum case when $p=2$.
\qquad\endproof
\end{remark}

\begin{remark}
We refer to the wavelet filter bank associated with the combined biorthogonal masks constructed in Theorem~\ref{thm:wavelet} as the {\it prime coset sum wavelet filter bank}. There are many potentially useful properties of the prime coset sum wavelet filter banks. One important property is that it can be implemented by fast algorithms (cf. Section~\ref{subS:algorithms}). 
\qquad\endproof
\end{remark}

\begin{remark}
In addition to the assumptions of Theorem~\ref{thm:wavelet}, if we assume that $U$ has $\alpha_1$ accuracy, $S$ has $\alpha_2$ accuracy, and $\alpha_3$ flatness, then by Lemma~\ref{lemma:accuracy} and Lemma~\ref{lemma:flatness}, $\GC[U]$ has at least $\alpha_1$ accuracy, $\GC[S]$ has at least $\min\{\alpha_2,\alpha_3\}$ accuracy, and at least $\alpha_3$ flatness. Combining these with Remark \ref{remark:vm}, we conclude that $\tau$ has at least $\min\{\alpha_1,\alpha_2,\alpha_3\}$ accuracy, and $t_\nu$ and $t_\nu^\du$, $\nu\in\set'$, have at least $\min\{\alpha_1,\alpha_2,\alpha_3\}$ vanishing moments.
\qquad\endproof
\end{remark}

\medskip
In order to prove Theorem~\ref{thm:wavelet}, we use the following lemma which connects the polyphase decomposition of the 1-D lowpass filter $H$ and the polyphase decomposition of the $\dm$-D prime coset sum lowpass filter $h$ obtained from $H$. Polyphase decomposition is a common method in Signal Processing and we give a brief review in Appendix~\ref{subS:ECLP}.

\begin{lemma}
\label{lemma:polyconnect}
Let $H$ be a 1-D lowpass filter with dilation $p$, and let $h$ be the $\dm$-D lowpass filter obtained from $H$ by applying the prime coset sum $\GC$. Let the sets $\set'$ and $\Fp'$, and the map $\eta:\set'\times \Fp'\to\set'$ be defined as before. Then for any $\nu\in\set'$, 
\beaN
(h(\nu+p\cdot))\widehat{\phantom{x}}(p\ome)={1\over (p-1)p^{\dm-1}}\sum_{l\in \Fp'}e^{i\ome\cdot (\nu-\eta(l,\nu)l)} \Big(H(l+p\cdot)\Big)\widehat{\phantom{x}}(p\ome\cdot\eta(l,\nu)), \ome\in\Td.
\eeaN
\end{lemma}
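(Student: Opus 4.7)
The plan is to start from the definition of the polyphase component and substitute the explicit formula (\ref{eq:filter}) for $h$. I will adopt the convention
\[
(h(\nu+p\cdot))\widehat{\phantom{x}}(p\ome)
= \frac{1}{p^\dm}\sum_{m\in\Zd} h(\nu+pm)\, e^{-ipm\cdot\ome},
\]
which is the normalization that makes the standard polyphase identity $\hat h(\ome)=\sum_{\nu\in\set}e^{-i\nu\cdot\ome}(h(\nu+p\cdot))\widehat{\phantom{x}}(p\ome)$ consistent with the paper's mask convention $\hat h=q^{-1}\sum_k h(k)e^{-ik\cdot\ome}$. Since $\nu\in\set'$ forces $\nu+pm\neq 0$ for every $m$, formula (\ref{eq:filter}) gives $h(\nu+pm)=(p-1)^{-1}\sum_{l\in W_{\nu+pm}}H(l)$.

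Next I swap the order of summation. The pairs $(l,m)$ with $l\in W_{\nu+pm}$ are in bijection with the triples $(l,\mu,m)\in(\ZZ\bks 0)\times\set'\times\Zd$ satisfying $l\mu=\nu+pm$, in which $m=(l\mu-\nu)/p$ is already determined by $(l,\mu)$. I then eliminate $\mu$: if $l\in p\ZZ$ then $l\mu\in p\Zd$ while $\nu\notin p\Zd$, so no $\mu$ works; for $l\notin p\ZZ$, primality of $p$ guarantees a unique multiplicative inverse $\rho(\bar l)\in\Fp'$ of $\bar l:=l\bmod p$, which forces $\mu=\eta(\bar l,\nu)\in\set'$ uniquely. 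This is exactly the same number-theoretic mechanism used in the proof of Lemma~\ref{lemma:prime}.

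I then partition the remaining sum over $l\in\ZZ\bks p\ZZ$ by residue class modulo $p$, writing $l=l_0+pj$ with $l_0\in\Fp'$ and $j\in\ZZ$, and using that $\eta(\bar l,\nu)=\eta(l_0,\nu)$ depends only on $l_0$. The phase $e^{-ipm\cdot\ome}=e^{-i(l\mu-\nu)\cdot\ome}$ factors as $e^{i\nu\cdot\ome}\cdot e^{-il_0\eta(l_0,\nu)\cdot\ome}\cdot e^{-ipj\eta(l_0,\nu)\cdot\ome}$, and the inner sum over $j$ is, under the analogous 1-D convention $(H(l_0+p\cdot))\widehat{\phantom{x}}(\xi)=\frac{1}{p}\sum_{j\in\ZZ}H(l_0+pj)e^{-ij\xi}$, precisely $p\cdot(H(l_0+p\cdot))\widehat{\phantom{x}}(p\ome\cdot\eta(l_0,\nu))$. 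Collecting the prefactors $\tfrac{1}{p^\dm}\cdot\tfrac{1}{p-1}\cdot p=\tfrac{1}{(p-1)p^{\dm-1}}$ and relabeling $l_0\to l$ yields the claimed identity.

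The main obstacle I anticipate is not conceptual but bookkeeping: matching the $1/p^\dm$ factor on the $\dm$-D side with the $1/p$ factor on the 1-D side so the constant $\tfrac{1}{(p-1)p^{\dm-1}}$ emerges correctly, and carefully verifying that the reparametrization $(l,m)\leftrightarrow(l,\mu)$ via $m=(l\mu-\nu)/p$ really is a bijection between the two ways of indexing the double sum. Once those two points are in place, the substantive content reduces entirely to the uniqueness of $\eta(\bar l,\nu)$, which is already in hand from the primality argument.
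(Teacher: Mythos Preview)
Your argument is correct, and it takes a genuinely different route from the paper. The paper works entirely in the frequency domain: it expands $\hat h$ via the prime coset sum definition and the 1-D polyphase identity for $\hat H$, then recovers the $\nu$-th polyphase component of $h$ using the inversion formula $(h(\nu+p\cdot))\widehat{\phantom{x}}(p\ome)=p^{-\dm}\sum_{\gam\in\set^\ast}e^{i(\ome+\gam)\cdot\nu}\hat h(\ome+\gam)$, and finally applies the character orthogonality $\sum_{\gam\in\set^\ast}e^{i\gam\cdot(\nu-\tilde\nu l)}=p^\dm\delta_{\tilde\nu,\eta(l,\nu)}$ to pick out the single surviving $\tilde\nu$. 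Your proof works in the spatial domain instead: you plug the explicit filter formula~(\ref{eq:filter}) directly into the polyphase sum, reparametrize the double sum over $(m,l)$ by the pair $(l,\mu)$ via the bijection $l\mu=\nu+pm$, and then use invertibility in $\ZZ/p\ZZ$ to pin down $\mu=\eta(\bar l,\nu)$. Both arguments hinge on the same number-theoretic fact, but the paper packages it as a character-sum identity while you package it as a bijection of index sets. Your approach is more elementary in that it avoids the inversion identity~(\ref{eq:hhatpw}) and uses only the spatial formula~(\ref{eq:filter}); the paper's approach is closer to standard polyphase machinery and perhaps makes the role of the coset structure $\set^\ast$ more visible.
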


\medskip
\begin{proof}
First it is easy to see that (cf. (\ref{eq:polymask_s}) in Appendix~\ref{subS:ECLP})
$$
\hatH(\ome)\,=\,\sum_{l\in\Fp}e^{-i\ome l}\Big(H(l+p\cdot)\Big)\widehat{\phantom{x}}(p\ome),\quad\ome\in\TT.
$$
Using this identity and the definition of prime coset sum, we get
\bea
\hath(\ome)&\,=\,&{1\over (p-1)p^{\dm-1}}\left(1-p^{\dm-1}+\sum_{\nu\in\set'}\hatH(\ome\cdot\nu)\right),\quad \ome\in\Td \nonumber \\
&=&{1\over (p-1)p^{\dm-1}}\left(1-p^{\dm-1}+\sum_{\nu\in\set'}\sum_{l\in\Fp}e^{-i\ome\cdot\nu l}\Big(H(l+p\cdot)\Big)\widehat{\phantom{x}}(p\ome\cdot\nu)\right).
\label{eq:polydecom_h}
\eea
Next we use another identity that can be quickly derived (cf. (48) in \cite{Hur}):
\be
\label{eq:hhatpw}
(h(\nu+p\cdot))\widehat{\phantom{x}}(p\ome)={1\over p^\dm}\sum_{\gam\in\set^\ast}e^{i(\ome+\gam)\cdot\nu}\;\hath(\ome+\gam),\quad\ome\in\Td.
\ee
By using (\ref{eq:polydecom_h}), (\ref{eq:hhatpw}), and the fact that $\Big(H(l+p\cdot)\Big)\widehat{\phantom{x}}(p(\ome+\gam)\cdot\tilde{\nu})=\Big(H(l+p\cdot)\Big)\widehat{\phantom{x}}(p\ome\cdot\tilde{\nu})$, for any $l\in\Fp$, $\ome\in\Td$, $\gam\in\set^\ast$ and $\tilde{\nu}\in\set'$, we obtain $(h(\nu+p\cdot))\widehat{\phantom{x}}(p\ome)=$
\beaN
{1\over p^\dm}\sum_{\gam\in\set^\ast}e^{i(\ome+\gam)\cdot\nu}{1\over (p-1)p^{\dm-1}}\left(1-p^{\dm-1}+\sum_{\tilde{\nu}\in\set'}\sum_{l\in\Fp}e^{-i(\ome+\gam)\cdot\tilde{\nu} l}\Big(H(l+p\cdot)\Big)\widehat{\phantom{x}}(p\ome\cdot\tilde{\nu})\right).
\eeaN
Then we use the following simple identity (cf. (\ref{eq:dualchar})):
\beaN
\sum_{\gam\in\set^\ast}e^{i\gam\cdot\nu}=p^\dm\delta_{\nu,0}=
\cases{
      p^\dm,&\mbox{if $\nu=0$},\cr
      0,&\mbox{if $\nu\in\set'\bks0$},\cr
}
\eeaN
to get
\beaN
&&(h(\nu+p\cdot))\widehat{\phantom{x}}(p\ome) \\
&=&{1\over p^\dm}\sum_{\gam\in\set^\ast}e^{i(\ome+\gam)\cdot\nu}{1\over (p-1)p^{\dm-1}}\sum_{\tilde{\nu}\in\set'}\sum_{l\in\Fp'}e^{-i(\ome+\gam)\cdot\tilde{\nu} l}\Big(H(l+p\cdot)\Big)\widehat{\phantom{x}}(p\ome\cdot\tilde{\nu})\\
&=&{1\over p^\dm}{1\over (p-1)p^{\dm-1}}\sum_{\tilde{\nu}\in\set'}\sum_{l\in\Fp'}e^{i\ome\cdot(\nu-\tilde{\nu} l)}\Big(H(l+p\cdot)\Big)\widehat{\phantom{x}}(p\ome\cdot\tilde{\nu})\sum_{\gam\in\set^\ast}e^{i\gam\cdot(\nu-\tilde{\nu} l)},\quad\ome\in\Td.
\eeaN
Noting that $\sum_{\gam\in\set^\ast}e^{i\gam\cdot(\nu-\tilde{\nu} l)}=p^\dm$ if $\tilde{\nu}=\eta(l,\nu)$, and it is equal to $0$ otherwise, we obtain
\beaN
(h(\nu+p\cdot))\widehat{\phantom{x}}(p\ome)={1\over (p-1)p^{\dm-1}}\sum_{l\in \Fp'}e^{i\ome\cdot (\nu-\eta(l,\nu)l)} \Big(H(l+p\cdot)\Big)\widehat{\phantom{x}}(p\ome\cdot\eta(l,\nu)),\ome\in\Td,
\eeaN
as desired.
\end{proof}

\medskip
We now present the proof of Theorem~\ref{thm:wavelet}.

\begin{proof}[Proof of Theorem~\ref{thm:wavelet}]
Let $g$ and $h$ be the $\dm$-D lowpass filters associated with refinement masks $\GC[S]$ and $\GC[U]$.
Since $U$ is interpolatory, by Lemma~\ref{lemma:interpolatory}, $\GC[U]$ is also interpolatory, i.e., $h$ is interpolatory. Therefore, we can obtain the combined biorthogonal masks by using Result~\ref{result:result3}. By setting $\hatg:=\GC[S]$ and $\hath:=\GC[U]$ in Result~\ref{result:result3}, we obtain that, for every $\ome\in\Td$,
\beaN
\tau(\ome)&\,=\,&\hatg(\ome)+\Big(1-\sum_{\gam\in\set^\ast}\hatg(\ome+\gam)\overline{\hath(\ome+\gam)}\Big)\\
&=&\GC[S](\ome)+\left(1-\sum_{\gam\in\set^\ast}\GC[S](\ome+\gam)\overline{\GC[U](\ome+\gam)}\right),
\eeaN
and
\beaN
\tau^\du(\ome)=\hath(\ome)=\GC[U](\ome).
\eeaN

Since, in this case, $\dil=p{\tt I}_\dm$ and $q=p^\dm$, the $\dm$-D wavelet masks $t_\nu$, $\nu\in\set'$, are
\beaN
t_\nu(\ome)&\,=\,&e^{-i\ome\cdot\nu}-q\;\overline{(h(\nu+\dil\cdot))\widehat{\phantom{x}}(\dil^\ast\ome)}\\
&=&e^{-i\ome\cdot\nu}-p^\dm\;\overline{(h(\nu+p\cdot))\widehat{\phantom{x}}(p\ome)}, \quad \ome\in\Td.
\eeaN
Since $H$ is the $1$-D filter associated with $U$ and $h$ is the $\dm$-D filter associated with $\GC[U]$, by Lemma~\ref{lemma:polyconnect}, we have
\beaN
(h(\nu+p\cdot))\widehat{\phantom{x}}(p\ome)={1\over (p-1)p^{\dm-1}}\sum_{l\in \Fp'}e^{i\ome\cdot (\nu-\eta(l,\nu)l)} \Big(H(l+p\cdot)\Big)\widehat{\phantom{x}}(p\ome\cdot\eta(l,\nu)).
\eeaN
Therefore,
\beaN
t_\nu(\ome)&\,=\,&e^{-i\ome\cdot\nu}-p^\dm \; \overline{{1\over (p-1)p^{\dm-1}}\sum_{l\in \Fp'}e^{i\ome\cdot (\nu-\eta(l,\nu)l)} \Big(H(l+p\cdot)\Big)\widehat{\phantom{x}}(p\ome\cdot\eta(l,\nu))}
\\&=&
e^{-i\ome\cdot\nu}-{p\over p-1}\sum_{l\in \Fp'}e^{i\ome\cdot (\eta(l,\nu) l -\nu)}\; \overline{ \Big(H(l+p\cdot)\Big)\widehat{\phantom{x}}(p\ome\cdot\eta(l,\nu)) }\\
&=& e^{-i\ome\cdot\nu}\left(1-{p\over p-1}\sum_{l\in \Fp'}e^{i\ome\cdot \eta(l,\nu)l} \;\overline{U_l\Big(p\ome\cdot \eta(l,\nu)\Big)}\right),\quad\ome\in\Td.
\eeaN
The wavelet masks $t_\nu^\du$, $\nu\in\set'$, in (\ref{eq:tnudu}) can be obtained by applying similar arguments to the general form of $t_\nu^\du$, $\nu\in\set'$, in Result~\ref{result:result3}. This concludes that $(\tau,(t_\nu)_{\nu\in\set'})$ and $(\tau^\du,(t^\du_\nu)_{\nu\in\set'})$ defined as in Theorem~\ref{thm:wavelet} form $\dm$-D combined biorthogonal masks.
\end{proof}

\medskip
The following corollary of Theorem~\ref{thm:wavelet} may be useful on its own in some contexts. 

\begin{corollary}
\label{coro:biorthogonal}
Suppose that $S$ and $U$ are two 1-D refinement masks with prime dilation $p$, and that $U$ is interpolatory. Let $\GC$ be the prime coset sum. Then the two $\dm$-D refinement masks $\GC[U]$ and
$$\GC[S]+\left(1-\sum_{\gam\in\set^\ast}\GC[S](\cdot+\gam)\overline{\GC[U](\cdot+\gam)}\right)$$
with dilation $p{\tt I}_\dm$ are biorthogonal. 
\qquad\endproof
\end{corollary}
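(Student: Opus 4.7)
My plan is to verify the biorthogonality identity
\[
\sum_{\gam\in\set^\ast}\overline{\tau(\ome+\gam)}\,\tau^\du(\ome+\gam)=1,\qquad \ome\in\Td,
\]
directly from the definitions, where I set $\tau^\du:=\GC[U]$ and $\tau:=\GC[S]+(1-A)$, with
$A(\ome):=\sum_{\gam\in\set^\ast}\GC[S](\ome+\gam)\overline{\GC[U](\ome+\gam)}$.

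The first, and only nontrivial, step is to observe that $A$ is $\set^\ast$-periodic. Indeed, shifting $\ome\mapsto\ome+\gam''$ for any $\gam''\in\set^\ast$ merely permutes the summation index $\gam$ modulo $2\pi\Zd$, and $\GC[S]$ and $\GC[U]$ are $2\pi\Zd$-periodic, so $A(\ome+\gam'')=A(\ome)$. The upshot is that
$\tau(\ome+\gam) = \GC[S](\ome+\gam) + (1-A(\ome))$ for every $\gam\in\set^\ast$, i.e.\ the correction term is a constant along the shifts in $\set^\ast$.

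The second step is to invoke Lemma~\ref{lemma:interpolatory}, which guarantees that $\GC[U]$ inherits the interpolatory property of $U$. Hence $\sum_{\gam\in\set^\ast}\GC[U](\ome+\gam)=1$. Plugging these two facts into the biorthogonality sum, distributing the conjugate, and recognizing the definition of $A$, the sum splits into $\overline{A(\ome)} + \overline{(1-A(\ome))}\cdot 1 = \overline{A(\ome)+(1-A(\ome))} = 1$, as required. A brief aside in the write-up would verify that $\tau(0)=1$ (so that $\tau$ is indeed a refinement mask), which follows because $\GC[S](0)=1$ and $A(0)=1$ (the latter using that all refinement masks are assumed to have accuracy number $\geq 1$).

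I do not anticipate any serious obstacle: the whole argument rests on the $\set^\ast$-periodicity of $A$ and the interpolatory property of $\GC[U]$, both immediate. An alternative route is to note that $\tau$ and $\tau^\du$ here are exactly the refinement masks appearing in Theorem~\ref{thm:wavelet}, so the claim can be extracted from the combined biorthogonality proved there; however, the direct verification above is preferable because it isolates the refinement-mask biorthogonality without ever referring to the wavelet masks $t_\nu,t_\nu^\du$.
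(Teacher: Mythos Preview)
Your direct verification is correct. The paper, by contrast, does not prove the corollary separately; it presents it as an immediate consequence of Theorem~\ref{thm:wavelet}, which establishes that $(\tau,(t_\nu))$ and $(\tau^\du,(t_\nu^\du))$ form combined biorthogonal masks, and then relies on the standard fact that the refinement masks of a combined biorthogonal system are themselves biorthogonal (this follows from the matrix identity behind~(\ref{eq:muep}): if the modulation matrices satisfy $PQ=I$ then $QP=I$, and one diagonal entry of the latter is precisely the biorthogonality relation). Your route is more elementary and self-contained: it never touches the wavelet masks $t_\nu,t_\nu^\du$, and it makes transparent that the only two ingredients are the $\set^\ast$-periodicity of the correction term $A$ and the interpolatory identity $\sum_{\gam\in\set^\ast}\GC[U](\cdot+\gam)=1$ supplied by Lemma~\ref{lemma:interpolatory}. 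In fact your argument works verbatim for \emph{any} pair of refinement masks $\hatg,\hath$ with $\hath$ interpolatory, not just prime coset sum masks, so it isolates the general mechanism underlying the refinement-mask part of Result~\ref{result:result3}. The paper's route, on the other hand, delivers the full filter-bank structure in one stroke, with the biorthogonality of $\tau,\tau^\du$ emerging as a by-product.
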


\begin{remark}
Of the two prime coset sum refinement masks $\GC[S]$ and $\GC[U]$, only the non-interpolatory mask $\GC[S]$ is modified by adding $1-\sum_{\gam\in\set^\ast}\GC[S](\cdot+\gam)\overline{\GC[U](\cdot+\gam)}$. We note that the statement of Corollary~\ref{coro:biorthogonal} holds true trivially for the case when $\GC[S]$ and $\GC[U]$ are already biorthogonal, since $1-\sum_{\gam\in\set^\ast}\GC[S](\cdot+\gam)\overline{\GC[U](\cdot+\gam)}= 0$ in such a case. One such case is when $S$ and $U$ are biorthogonal and $p=2$ (cf. Result~\ref{result:result1}(b)). 
\qquad\endproof
\end{remark}

\medskip
Next we illustrate our findings in two examples.

\begin{example}[\bf Centered $\dm$-D Haar combined biorthogonal masks with prime dilation $p$]
\label{example:4}
Let us consider the centered 1-D Haar refinement mask with prime dilation $p$.
We let 
\beaN
S(\ome)=U(\ome):=\disp{1\over p}\left(e^{i {{p-1}\over 2} \ome}+\cdots+e^{i\ome}+1+e^{-i\ome}+\cdots+e^{-i {{p-1}\over2} \ome} \right).
\eeaN
For example, when $p=3$, $S(\ome)=U(\ome):=\disp{1\over 3}\left(e^{i\ome}+1+e^{-i\ome}\right)$ as in Example \ref{example:2}.
Then they are both interpolatory with one accuracy. 
Now let us take $\set=\{-{{p-1}\over2},\cdots,-1,0,1,\cdots,{{p-1}\over2}\}^\dm$ and $\set^\ast={2\pi\over p}\{-{{p-1}\over2},\cdots,-1,0,1,\cdots,{{p-1}\over2}\}^\dm$ for any dimension $\dm\ge 2$. Then by Theorem~\ref{thm:wavelet} the $\dm$-D biorthogonal refinement masks 
\beaN\tau(\ome)=\tau^\du(\ome)=\disp{1\over p^\dm}\sum_{\nu\in\set}e^{-i\ome\cdot\nu},\quad \ome\in\Td,
\eeaN
and $\dm$-D wavelet masks
\beaN
t_\nu(\ome)=e^{-i\ome\cdot\nu}-1,\quad t^\du_{\nu}(\ome)=\disp{1\over p^\dm}\,e^{-i\ome\cdot\nu}-{1\over p^{2\dm}}\sum_{\mu\in\set}e^{-i\ome\cdot\mu},\quad \ome\in\Td,
\eeaN
for $\nu\in\set'$, form $\dm$-D combined biorthogonal masks. When $p=3$, the combined biorthogonal masks are studied in \cite{HZAsilomar}. By direct computation, we see that both $\tau$ and $\tau^\du$ have one accuracy, and that both $t_\nu$ and $t^\du_{\nu}$ have one vanishing moment for any $\nu\in\set'$. The number of nonzero entries, or the support of the filter associated with $t_\nu$ is only $2$ for any $\nu\in\set'$, and any dimension $\dm$ and dilation $p$.
\qquad\endproof
\end{example}

\begin{example}[\bf $2$-D combined biorthogonal masks with higher vanishing moments]
\label{example:5}
 Let $U$ be a $1$-D interpolatory refinement mask with dilation $3$ and accuracy $4$ \footnote{$U$ is obtained from 
\cite{KoSw}.}
 \begin{scriptsize}
\beaN
U(\ome):=\disp{1\over 3}\left(-{4\over81}e^{5i\ome}-{5\over81}e^{4i\ome}+{30\over81}e^{2i\ome}+{60\over81}e^{i\ome}+1+{60\over81}e^{-i\ome}+{30\over81}e^{-2i\ome}-{5\over81}e^{-4i\ome}-{4\over81}e^{-5i\ome}\right).
\eeaN
\end{scriptsize}
Let $S(\ome):=\disp{1\over 3}\left(e^{i\ome}+1+e^{-i\ome}\right)$. We take $\set=\{-1,0,1\}^2$ and $\set^\ast={2\pi\over3}\{-1,0,1\}^2$. Then by Theorem~\ref{thm:wavelet} the $2$-D biorthogonal refinement masks
\begin{scriptsize}
\beaN
\tau(\ome)={1\over9}\left({83\over27}+\sum_{\nu\in\set'}e^{-i\ome\cdot\nu}-{25\over81}\sum_{\nu\in\set'}e^{-3i\ome\cdot\nu}+{4\over81}\sum_{\nu\in\set'}e^{-6i\ome\cdot\nu}\right), \quad \ome\in\TT^2,
\eeaN
\beaN
\tau^\du(\ome)={1\over9}\left(1+{60\over81}\sum_{\nu\in\set'}e^{-i\ome\cdot\nu}+{30\over81}\sum_{\nu\in\set'}e^{-2i\ome\cdot\nu}-{5\over81}\sum_{\nu\in\set'}e^{-4i\ome\cdot\nu}-{4\over81}\sum_{\nu\in\set'}e^{-5i\ome\cdot\nu}\right),\quad \ome\in\TT^2,
\eeaN
\end{scriptsize}
and $2$-D wavelet masks
\beaN
t_{\nu}(\ome)=e^{-i\ome\cdot\nu}+{5\over81}e^{3i\ome\cdot\nu}-{60\over81}-{30\over81}e^{-3i\ome\cdot\nu}+{4\over81}e^{-6i\ome\cdot\nu},\quad \ome\in\TT^2,
\eeaN
\beaN
t_\nu^\du(\ome)={1\over9}\left(e^{-i\ome\cdot\nu}-\tau^\du(\ome)\right),\quad \ome\in\TT^2,
\eeaN
for $\nu\in\set'$, form $2$-D combined biorthogonal masks (cf. Figure \ref{figure:VM4} for the filters associated with $U$ and $\tau^\du$). Direct computation shows that $\tau$ has one accuracy, $\tau^\du$ has $4$ accuracy, $t_\nu$, $\nu\in\set'$, have $4$ vanishing moments, and $t_\nu^\du$, $\nu\in\set'$, have one vanishing moment. The support of the filter associated with $t_\nu$ is only $5$ for any $\nu\in\set'$.
\qquad\endproof
\end{example}

\begin{figure}[t]
\begin{picture}(500,270)
\put(68,250){
{
\centering
\small\begin{tabular}{ccccccccccc}
$-\frac{4}{81}$ & $-\frac{5}{81}$ & 0 & $\frac{30}{81}$ & $\frac{60}{81}$ & {\bf 1} &
$\frac{60}{81}$ & $\frac{30}{81}$ & 0 & $-\frac{5}{81}$ & $-\frac{4}{81}$\\[7pt]
\end{tabular}
}
}
\put(144,235){Filter associated with $U$}
\put(197,218){$\mathcal{C}_{2,3}$ (Prime coset sum)}
\put(192,230){\vector(0,-1){23}}
\put(61,100){
{
\centering
\small\begin{tabular}{ccccccccccc}
$-\frac{4}{81}$ & 0 & 0 & 0 & 0 & $-\frac{4}{81}$ & 0 & 0 & 0 & 0 & $-\frac{4}{81}$  \\[7pt]
0 & $-\frac{5}{81}$ & 0 & 0 & 0 & $-\frac{5}{81}$ & 0 & 0 & 0 & $-\frac{5}{81}$ & 0 \\[7pt]
0 & 0 & 0 & 0 & 0 & 0 & 0 & 0 & 0 & 0 & 0 \\[7pt] 
0 & 0 & 0& $\frac{30}{81}$ & 0& $\frac{30}{81}$  & 0 & $\frac{30}{81}$ & 0 &0 & 0\\[7pt]
0 & 0 & 0& 0& $\frac{60}{81}$& $\frac{60}{81}$ & $\frac{60}{81}$ & 0 &0 & 0 & 0\\[7pt]
$-\frac{4}{81}$ & $-\frac{5}{81}$ & 0 & $\frac{30}{81}$ & $\frac{60}{81}$ & {\bf 1} &
$\frac{60}{81}$ & $\frac{30}{81}$ & 0 & $-\frac{5}{81}$ & $-\frac{4}{81}$\\[7pt]
0 & 0 & 0& 0& $\frac{60}{81}$& $\frac{60}{81}$ & $\frac{60}{81}$ & 0 &0 & 0 & 0\\[7pt]
0 & 0 & 0& $\frac{30}{81}$ & 0& $\frac{30}{81}$  & 0 & $\frac{30}{81}$ & 0 &0 & 0\\[7pt]
0 & 0 & 0 & 0 & 0 & 0 & 0 & 0 & 0 & 0 & 0 \\[7pt] 
0 & $-\frac{5}{81}$ & 0 & 0 & 0 & $-\frac{5}{81}$ & 0 & 0 & 0 & $-\frac{5}{81}$ & 0 \\[7pt]
$-\frac{4}{81}$ & 0 & 0 & 0 & 0 & $-\frac{4}{81}$ & 0 & 0 & 0 & 0 & $-\frac{4}{81}$  \\[7pt]
\end{tabular}
}
}
\put(120,-3){Filter associated with $\tau^\du=\mathcal{C}_{2,3}[U]$}
\end{picture}
\caption{Lowpass filters associated with the masks $U$ and $\tau^\du$ in Example \ref{example:5}.}
\label{figure:VM4}
\end{figure}

\subsection{Algorithms}
\label{subS:algorithms}

Theorem~\ref{thm:wavelet} provides only one of many ways to obtain the non-redundant wavelet filter bank, given the two $\dm$-D refinement masks $\GC[S]$ and $\GC[U]$.
However, the resulting prime coset sum wavelet filter bank can be associated with fast algorithms that are faster than the usual tensor product ones. Below we present these fast prime coset sum algorithms.

\medskip\noindent{\bf Fast Prime Coset Sum Wavelet Algorithms.}
Let $G$ and $H$ be two 1-D lowpass filters with dilation $p$, where $H$ is interpolatory. In presenting our algorithms, we use the set $\Fp$ and the map $\eta$ that we defined in Section~\ref{subS:theory}. 

\medskip
{\obeylines{{\tt
{\it input}  $y_J: \Zd\to\RR$
\smallskip
{\bf (1) Decomposition Algorithm: computing $y_{j-1}$, $w_{\nu,j-1}$, $\nu\in\set'$ from $y_j$}
for $j=J,J-1,\ldots,1$
\ for $\nu\in\set'$ and $k\in\Zd$
\ \ $w_{\nu,j-1}(k)=y_j(pk+\nu)-\disp{1\over p-1}\disp\sum_{l\in\Fp'}\disp\sum_{m\equiv l}H(m)y_{j}(pk+\nu-\eta(l,\nu)m)$\hfill (i)
\ end
\ for $k\in\Zd$
\ \ $y_{j-1}(k)=y_j(pk)+\disp{1\over (p-1)p^{\dm}}\sum_{\nu\in\set'}\sum_{l\in\Fp'}\sum_{m\equiv l}G(m)w_{\nu,j-1}(k-\disp{ {\nu- \eta(l,\nu)}m \over p})$\hfill (ii)
\ end
end

\medskip
{\bf (2) Reconstruction Algorithm: computing $y_j$ from $y_{j-1}$, $w_{\nu,j-1}$, $\nu\in\set'$}
for $j=1,\ldots,J-1,J$ 
\ for $k\in\Zd$
\ \ $y_j(pk)=y_{j-1}(k)-\disp{1\over (p-1)p^{\dm}}\sum_{\nu\in\set'}\sum_{l\in\Fp'}\sum_{m\equiv l}G(m)w_{\nu,j-1}(k-\disp{\nu-{\eta(l,\nu)}m\over p})$ \hfill (iii)
\ end
\ for $\nu\in\set'$ and $k\in\Zd$
\  \ $y_j(pk+\nu)=w_{\nu,j-1}(k)+\disp{1\over p-1}\disp\sum_{l\in\Fp'}\disp\sum_{m\equiv l}H(m)y_{j}(pk+\nu-\eta(l,\nu)m)$ \hfill (iv)
\ end
end
}}}

\medskip
For decomposition, we compute the coarse coefficients $y_{j-1}$ and wavelet coefficients $w_{\nu,j-1}$, $\nu\in\set'$, from $y_j$. To obtain $w_{\nu,j-1}$, $\nu\in\set'$, we apply the filter associated with $t_\nu$, $\nu\in\set'$ to $y_j$, followed by downsampling with respect to the dilation matrix $\dil=p{\tt I}_\dm$, as is typically done in wavelet decomposition. Since $t_\nu$, $\nu\in\set'$, are written in terms of $U_l$, $l\in\Fp'$, and since $U_l$ can be written in terms of $1$-D filter $H$, we obtain the formula for Step {\tt (i)}. The proof of the identity in Step (i) is given in Appendix~\ref{subS:proofofalgorithm}, in which the concept of polyphase decomposition (cf. Appendix~\ref{subS:ECLP}) is used.

A key step of our decomposition algorithm is Step {\tt (ii)}. Typically, to obtain $y_{j-1}$, one needs to apply the filter associated with $\tau$ to $y_j$, followed by downsampling. However, since we have $\tau=\GC[S]+\left(1-\sum_{\gam\in\set^\ast}\GC[S](\cdot+\gam)\overline{\GC[U](\cdot+\gam)}\right)$ (cf. Theorem~\ref{thm:wavelet}) in this case, contrary to the filter associated with the first part of $\tau$, i.e. $\GC[S]$, it is not clear how the filter associated with the rest of the mask $\tau$, i.e. $1-\sum_{\gam\in\set^\ast}\GC[S](\cdot+\gam)\overline{\GC[U](\cdot+\gam)}$, would look like. As a result, the support of the filter associated with $\tau$ could be large. Therefore, the algorithm may not be necessarily faster than other wavelet algorithms if we use the filter associated with $\tau$ directly. However, by using the polyphase representation (cf. Appendix~\ref{subS:ECLP}), one can show that $y_{j-1}$ can also be derived by applying the filter associated with $\GC[S]$ (the first part of $\tau$) to $w_{\nu,j-1}$, $\nu\in\set'$. This is our Step {\tt (ii)}, and the details of exactly how it is done are written in Appendix~\ref{subS:proofofalgorithm}.

Our reconstruction algorithm is not the same as the typical wavelet reconstruction procedure either. We recall that the typical wavelet reconstruction is conducted by applying the reconstruction filters to $y_{j-1}$ and $w_{\nu,j-1}$, $\nu\in\set'$, upsampling them, and then summing them up. We reconstruct the signal by simply reversing Step {\tt (i)} and {\tt (ii)}. Step {\tt (iii)} is a reverse procedure of Step {\tt (ii)} that can always be performed. Step {\tt (iv)} is a reverse procedure of Step {\tt (i)}, and it is possible because the only $y_j$ needed in the right-hand side of Step {\tt (iv)} is $y_j(pk)$, which is already computed in Step {\tt (iii)}. 

\medskip\noindent
{\bf Complexity.}  
Next we discuss the complexity of the fast prime coset sum wavelet algorithms. We measure the complexity by counting the number of multiplicative operations needed in a complete cycle of $1$-level-down decomposition and $1$-level-up reconstruction, meaning the number of operations needed to fully derive $y_{j-1}$ and $w_{\nu,j-1}$, $\nu\in\set'$ from $y_j$, and to get back $y_j$. Here we only compute the number of multiplicative operations such as multiplication and division, as computing additive operations gives a similar result. 

Suppose that at level $j$, we have input data $y_j$ with $N$ data points. For simplicity, we assume that $N$ is a multiple of $p^\dm$, where $p$ is the dilation and $\dm$ is the spatial dimension. Then after $1$-level-down decomposition, we obtain $N/p^\dm$ coarse coefficients $y_{j-1}$ in Step {\tt (ii)}, and $N/p^\dm$ wavelet coefficients $w_{\nu,j-1}$ for each $\nu\in\set'$ in Step {\tt (i)}. We reconstruct the input data $y_j$ from coarse coefficients $y_{j-1}$ and wavelet coefficients $w_{\nu,j-1}$, $\nu\in\set'$. In particular, we obtain $N/p^\dm$ original data $y_j(pk)$ in Step {\tt (iii)} and $N/p^\dm$ original data $y_j(pk+\nu)$ for each $\nu\in\set'$ in Step {\tt (iv)}.

Suppose $\alpha$ and $\beta$ are the number of nonzero entries in the $1$-D lowpass filter $G$ and $H$, respectively. Recall that $H$ is interpolatory. 
Let
\beaN
\tilde{\alpha} \,:=\, \# \{ G(m) :\; G(m) \neq 0 \mbox{ and } m\equiv l \, (\mod p\ZZ)\mbox{ for some } l\in\Fp' \}.
\eeaN
Given the $N$ data points of the input data $y_j$, the number of multiplicative operations needed in a complete cycle of $1$-level-down decomposition and $1$-level-up reconstruction is the sum of

\begin{itemize}
\item $2\beta(p^\dm-1){N\over p^\dm}$ [for Step {\tt (i)} and {\tt (iv)}], and
\item $2\Big( (p^\dm-1)\tilde{\alpha}+n+1 \Big)  {N\over p^\dm}$ [for Step {\tt (ii)} and {\tt (iii)}]. 
\end{itemize}
Therefore, as a result, the complexity of the fast prime coset sum wavelet algorithms is
\be
\label{eq:complexity}
\left( { 2(p^\dm-1)\beta+2(p^\dm-1)\tilde{\alpha}+2n+2 \over p^\dm}\right)N.
\ee
Since $\tilde{\alpha}\le{p-1 \over p}(\alpha+1)$, this complexity is bounded above by
\beaN
 \Big(2\beta+2\,{p-1 \over p}(\alpha+1)+1 \Big)N.
\eeaN

Recall that in dyadic case, the fast tensor product wavelet algorithms have complexity $(\alpha+\beta)\dm N$, where $\alpha$ and $\beta$ are the number of nonzero entries of $1$-D lowpass filters, $\dm$ is the spatial dimension and $N$ is the data size (see, for example, \cite{HZ}). Therefore, the algorithm has linear complexity, i.e., $\sim CN$, with the data size $N$, where $C$ is some constant that does not depend on $N$. We refer to this constant as the {\it complexity constant}. The complexity constant for fast tensor product wavelet algorithm is $C_{TP}=(\alpha+\beta)\dm$. In particular, it grows linearly with the dimension $\dm$. Now let us consider the fast prime coset sum wavelet algorithm. In dyadic case, i.e., when $p=2$, the complexity is bounded above by $ (\alpha+2\beta+2)N$. Therefore, the complexity constant for the prime coset sum is $C_{PCS}=\alpha+2\beta+2$, which does not increase as dimension $\dm$ increases. Furthermore, since $\alpha\ge 2$, we have $C_{PCS}\le C_{TP}$ for all $\dm\ge 2$, which suggests that our fast prime coset sum algorithms can be much faster, at least in theory, than the fast tensor product algorithms when $\dm$ is large.

Our fast algorithms with $p=2$ are different from the original fast coset sum algorithms in \cite{HZ}, which results in a different complexity constant for the coset sum case. The complexity constant for the fast coset sum algorithms is $C_{CS}={3\over2}\alpha+2\beta$, and as a result, we have $C_{PCS}\le C_{CS}$ as long as $\alpha\ge 4$.

There are a couple of factors that contribute to make our algorithms this fast. Firstly, the number of nonzero entries in the $\dm$-D filter associated with $t_\nu$, $\nu\in\set'$, is essentially the same as that of the $1$-D filter $H$ (cf. Step ({\tt i})). Secondly, our decomposition algorithm is performed by bypassing the filter associated with $\tau$ (cf. Step ({\tt ii})), which could have large support, in general. Finally, the reconstruction algorithm has trivial reconstruction steps, which completely bypass the filters associated with $t_\nu^\du$, $\nu\in\set'$ (cf. Step ({\tt iii}) and ({\tt iv})).

We now discuss the fast algorithms for the prime coset sum wavelets in our previous examples.

\begin{example}({\bf Fast prime coset sum wavelet algorithms for the centered $\dm$-D Haar in Example~\ref{example:4}}).
\label{example:6}
Let us consider the centered $\dm$-D Haar combined biorthogonal masks with dilation $p$ constructed in Example~\ref{example:4}. For any fixed $p$, the $1$-D filter $G$ and $H$ are given as
\beaN
G(K)=H(K)=
\cases{
             1,\quad\mbox {if $K=0$},\cr
             1,\quad\mbox{if $K=\pm 1,\pm 2, \cdots, \pm {{p-1}\over 2}$},\cr
             0,\quad\mbox{otherwise}.\cr 
}
\eeaN
Then one can follow Step ({\tt i}) -- ({\tt iv}) with this pair of $G$ and $H$ to perform the fast algorithms. In this case, $\alpha=\beta=p$, $\tilde{\alpha}=p-1$. Hence for any dimension $\dm$, and input data of size $N$, the algorithms have complexity 
\beaN
\left( { 2p(p^\dm-1)+2(p-1)(p^\dm-1)+2n+2 \over p^\dm}\right)N
\le(4p-1)N.
\eeaN
Hence the complexity constant for a fixed $p$ is $4p-1$, and it is independent of the spatial dimension $\dm$.
\qquad\endproof
\end{example}

\begin{example}({\bf Fast prime coset sum wavelet algorithms for $2$-D wavelets with higher vanishing moments in Example~\ref{example:5}}).
\label{example:7}
Let us consider the $2$-D combined biorthogonal masks constructed in Example~\ref{example:5}. In this case, the $1$-D filter $G$ and $H$ are given as
\beaN
G(K)=
\cases{
             1,\quad\mbox {if $K=0$},\cr
             1,\quad\mbox{if $K=\pm 1$},\cr
             0,\quad\mbox{otherwise},\cr 
}      
\quad
H(K)=
\cases{
            1,&\mbox {if $K=0$},\cr
             {60\over81},&\mbox{if $K=\pm 1$},\cr
             {30\over81},&\mbox{if $K=\pm 2$},\cr
             -{5\over81},&\mbox{if $K=\pm 4$},\cr
             -{4\over81},&\mbox{if $K=\pm 5$},\cr
             0,&\mbox{otherwise}.\cr 
}
\eeaN
Then this pair of $G$ and $H$ can be used in Step ({\tt i}) -- ({\tt iv}) to implement the fast algorithms for the wavelet filter bank constructed in Example 5. In particular, since $\alpha=3$, $\beta=9$, $\tilde{\alpha}=2$, $p=3$ and $\dm=2$, the fast algorithms have complexity 
\beaN
\left( { 18(3^2-1)+4(3^2-1)+6 \over 3^2}\right)N
\le 21N,
\eeaN
for any input data of size $N$. Hence the complexity constant in this case is $21$.
\qquad\endproof
\end{example}

\section{Conclusion}
\label{section:conclusion}
In this paper we introduced a method called prime coset sum to construct multi-D refinement masks from $1$-D refinement masks. This method is a generalization of the existing method, the coset sum (\cite{HZ}), that works only for the dyadic dilations. We showed that for a prime dilation, the prime coset sum method maintains many important properties from the $1$-D refinement masks, such as interpolatory property, and under some conditions, the accuracy number. More importantly, the prime coset sum refinement masks can be used to construct wavelet filer banks with fast algorithms. Similar to the coset sum method for dyadic case, the prime coset sum fast algorithms have complexity constant that does not increase as the spatial dimension $\dm$ increases. This is contrary to the tensor product method, since its complexity constant increases linearly with the spatial dimension.

\appendix

\subsection{Proof of Lemma~\ref{lemma:interpolatory}}
\label{subS:proofoflemmainterpolatory}
Suppose $H$ and $h$ are the filters associated with masks $R$ and $\GC[R]$. If $R$ is interpolatory, by (\ref{eq:interpolatory}), $H(0)=1$, and $H(K)=0$ for any $K\in p\ZZ\bks 0$. Then, by (\ref{eq:filter}), $h(0)={1\over p-1}(p-p^\dm+(p^\dm-1)H(0))=1$, and $h(k)={1\over p-1}\sum_{l\in W_k}H(l)$ for any $k\ne0$. Since for each $k\in p\Zd\bks0$, every element $l$ in the set $W_k=\{l\in\ZZ\bks0: k=l\nu \hbox{ for some } \nu\in\set'\}$ must lie in $p\ZZ\bks0$, we see that $h(k)={1\over p-1}\sum_{l\in W_k}H(l)=0$ for any $k\in p\Zd\bks 0$. Hence $\GC[R]$ is interpolatory.

\subsection{Proof of Lemma~\ref{lemma:accuracy}}
\label{subS:proofoflemmaaccuracy}

First we note that $\GC[R]$ has at least accuracy number one, since $R$ has at least accuracy number one and $\GC$ is defined so that it preserves positive accuracy.

Let $\Fp^\ast$ be a complete set of representatives of the distinct cosets of $2\pi((p^{-1}\ZZ)/\ZZ)$ containing $0$. Since the order of zeros of $R$ at $\xi\in \Fp^\ast\bks0$ is $m_1$, and the order of zeros of $1-R$ at the origin is $m_2$, we have, for any integer $1\le k \le \min\{m_1,m_2\}-1$,
\begin{equation}
\label{eq:1Daccuracyresult}
(D^kR)(\xi)=0,\quad \mbox{for any }\xi\in \Fp^\ast.
\end{equation}
Thus, for any $\gam\in\set^\ast\bks0$ and any $\mu\in\Nd$ with $1\le |\mu| \le \min\{{m_1,m_2\}}-1$, where $|\mu|:=\mu_1+\cdots+\mu_\dm$, we get
\beaN
(D^\mu\GC[R])(\gam)&{\,=\,}&{1\over (p-1)p^{\dm-1}}\sum_{\nu\in\set'}(D^\mu[R(\ome\cdot\nu)])\mid_{\ome=\gam}\\
&=&{1\over (p-1)p^{\dm-1}}\sum_{\nu\in\set'}\left(\prod_{j=1}^\dm\nu_j^{\mu_j}\right)(D^{|\mu|}R)(\gam\cdot\nu)=0,
\eeaN
where the last equality is from (\ref{eq:1Daccuracyresult}) and the fact that $\gam\cdot\nu \,(\mod p\ZZ)$ belongs to $\Fp^\ast$. This implies the accuracy number of $\GC[R]$ is at least $\min\{m_1,m_2\}$.

\subsection{Review of Polyphase Representation of Wavelet Filter Banks}
\label{subS:ECLP}

The polyphase decomposition in \cite{V} is widely used in Signal Processing. We briefly review some relevant concepts in polyphase decomposition in terms of our notation and terminology, and refer other papers (e.g. \cite{DV2,Hur}) for details.

As before, we use $\dil$ to denote the dilation matrix, and $q$ to denote $|\det\dil|$. The polyphase decomposition transforms a filter (or signal) into $q$ filters (or signals) running at the sampling rate $1/q$. Let $\set$ be a complete set of representatives of the distinct cosets of $\Zd/\dil\Zd$ containing $0$, and let $\set'=\set\bks0$. For example, for the scalar dilation with $\lambda$, the set $\{0,1,\cdots,\lambda-1\}^\dm$ can be used for $\set$.  

The {\it polyphase decomposition of a synthesis filter} $h$ is defined as the Fourier series of $h(\nu+\dil\cdot)$, $\nu\in\set$:
\bea
\label{eq:poly_h}
{\tt H}_\nu(\ome):=(h(\nu+\dil\cdot))\widehat{\phantom{x}}(\ome)={\disp {1\over q}}\sum_{k\in\Zd}h(\nu+\dil k)e^{-ik\cdot \ome}, \quad \ome\in\Td,
\eea
and the {\it polyphase representation of a synthesis filter} $h$ is defined as the column $q$-vector of the form
\beaN
{\tt {H}}(\ome):=[{\tt H}_{\nu_0}(\ome),{\tt H}_{\nu_1}(\ome),\cdots,{\tt H}_{\nu_{q-1}}(\ome)]^T,\quad \ome\in\Td,
\eeaN
where $\nu_0=0$ and $\nu_j$, $j=1,\ldots,q-1$, are the ordered elements of the set $\set'$.
Then it is easy to see that the Fourier series of $h$ can be written in terms of the polyphase decomposition of $h$ as follows:
\be
\label{eq:polymask_s}
\hath(\ome)=\sum_{\nu\in\set}e^{-i\ome\cdot\nu}{\tt H}_\nu(\dil^\ast\ome).
\ee
Similarly, the {\it polyphase decomposition of an analysis filter} $g$ is defined as the complex conjugate of the Fourier series of $g(\nu+\dil\cdot)$, $\nu\in\set$:
\bea
\label{eq:poly_g}
{\tt G}_\nu(\ome):=\overline{(g(\nu+\dil\cdot))\widehat{\phantom{x}}(\ome)}={\disp {1\over q}}\sum_{k\in\Zd}g(\nu-\dil k)e^{-ik\cdot \ome}, \quad \ome\in\Td,
\eea
and the {\it polyphase representation of an analysis filter} $g$ is defined as the row $q$-vector of the form 
\beaN
{\tt {G}}(\ome):=[{\tt G}_{\nu_0}(\ome),{\tt G}_{\nu_1}(\ome),\cdots,{\tt G}_{\nu_{q-1}}(\ome)],\quad \ome\in\Td,
\eeaN
and, as a result, we have the identity
\beaN
\overline{\hatg(\ome)}=\sum_{\nu\in\set}e^{i\ome\cdot\nu}{\tt G}_\nu(\dil^\ast\ome).
\eeaN
Under these notations, it is easy to see that $h$ and $g$ are biorthogonal if and only if ${\tt G}(\ome){\tt H}(\ome)={1/q}$.

A filter bank (that is non-redunant with perfect reconstruction property) can be represented by two $q\times q$ polyphase matrices ${\tt A}(\ome)$ and ${\tt S}(\ome)$ that satisfy ${\tt S}(\ome){\tt A}(\ome)=(1/q){\tt I}_{q}$. 
The row vectors of ${\tt A}(\ome)$ represent the polyphase representation of analysis filters, where the first row corresponding to the lowpass filter and the rest to the highpass filters. The column vectors of ${\tt S}(\ome)$ represent the polyphase representation of synthesis filters, where the first column corresponding to the lowpass filter and the rest to the highpass filters.

We finish this subsection by stating Result~\ref{result:result3} in terms of the polyphase representation, as it will be useful in the later part of the paper.

\begin{result}[\bf Result~\ref{result:result3} stated in terms of polyphase representation]
\label{result:result4}
Suppose $g$ and $h$ are two $\dm$-D lowpass filters with dilation $\dil$, and $h$ is interpolatory. Let ${\tt G}(\ome)$ and ${\tt H}(\ome)$ be the polyphase representation of $g$ and $h$ with length $q=|\det\dil|$, and let $\tilde{\tt{G}}(\ome)$ and $\tilde{\tt{H}}(\ome)$ be the subvectors of $\tt{G}(\ome)$ and $\tt{H}(\ome)$ of length $q-1$, respectively, obtained by removing the first entry. Then the following two polyphase matrices
\bea
\label{eq:fb}
{\tt A}(\ome):=\left[\begin{array}{cc}{\tt G}_{\nu_0}(\ome)+q\; {\tt B}(\ome)&\quad \tilde{\tt G}(\ome)\\ -q \;\tilde{\tt H}(\ome)&\quad {\tt I}_{q-1}\end{array}\right]
\quad
{\tt S}(\ome):=\left[\begin{array}{cc}\disp{1\over q}&\quad \disp{1\over q}\;\tilde{\tt G}(\ome)\\ \tilde{\tt H}(\ome)&\quad \disp{1\over q}{\tt I}_{q-1}-\tilde{\tt H}(\ome)\tilde{\tt G}(\ome)\end{array}\right]
\eea
satisfy ${\tt S}(\ome){\tt A}(\ome)=(1/ q){\tt I}_{q}$, where ${\tt B}(\ome):=1/q-{\tt G}(\ome){\tt H}(\ome)$.
\qquad\endproof
\end{result}

\subsection{Proof of Result~\ref{result:result3}}
\label{subS:proofofresult3}
We want to show that $\tau$, $\tau^\du$, $t_\nu$ and $t^\du_{\nu}$, $\nu\in\set'$, in Result~\ref{result:result3}, satisfy the following identity (cf. (\ref{eq:muep}) in Section~\ref{subS:notation})
\beaN
\overline{\tau(\ome+\gam)}\tau^\du(\ome)+\sum_{\nu\in\set'}\overline{t_\nu(\ome+\gam)}t_\nu^\du(\ome)=\delta_{\gam,0}=
\cases{
1,\quad\mbox{if $\gam=0$},\cr
           0,\quad\mbox{if $\gam\in\set^\ast\bks0$}.\cr
}
\eeaN
By substituting the masks $\tau$, $\tau^\du$, $t_\nu$ and $t^\du_\nu$, $\nu\in\set'$, in Result~\ref{result:result3}, we get
\beaN
&& \overline{\tau(\ome+\gam)}\tau^\du(\ome)+\sum_{\nu\in\set'}\overline{t_\nu(\ome+\gam)}t_\nu^\du(\ome)\\
&\,=\,& \Big(\overline{\hatg(\ome+\gam)}+\Big(1-\sum_{\tilde{\gam}\in\set^\ast}\overline{\hatg(\ome+\tilde{\gam}+\gam)}\;\hath(\ome+\tilde{\gam}+\gam)\Big)\Big)\hath(\ome)  \\
&& + \sum_{\nu\in\set'}\left(e^{i(\ome+\gam)\cdot\nu}-q(h(\nu+\dil\cdot))\widehat{\phantom{x}}(\dil^\ast\ome)\right)\left({1\over q}e^{-i\ome\cdot\nu}-\overline{(g(\nu+\dil\cdot))\widehat{\phantom{x}}(\dil^\ast\ome)} \; \hath(\ome)\right)\\
&\,=\,&  \overline{\hatg(\ome+\gam)}\;\hath(\ome)+\hath(\ome)-\sum_{\tilde{\gam}\in\set^\ast}\overline{\hatg(\ome+\tilde{\gam}+\gam)}\;\hath(\ome+\tilde{\gam}+\gam)\hath(\ome)  \\
&& + {1\over q}\sum_{\nu\in\set}e^{i\gam\cdot\nu}-{1\over q}-\left(\sum_{\nu\in\set}e^{i(\ome+\gam)\cdot\nu} \; \overline{(g(\nu+\dil\cdot))\widehat{\phantom{x}}(\dil^\ast\ome)} \; \hath(\ome)-\overline{g(\dil\cdot)\widehat{\phantom{x}}(\dil^\ast\ome)}\; \hath(\ome)\right)  \\
&& - \left(\sum_{\nu\in\set}e^{-i\ome\cdot\nu}(h(\nu+\dil\cdot))\widehat{\phantom{x}}(\dil^\ast\ome)-h(\dil\cdot)\widehat{\phantom{x}}(\dil^\ast\ome)\right)  \\
&& + 
q\sum_{\nu\in\set'}(h(\nu+\dil\cdot))\widehat{\phantom{x}}(\dil^\ast\ome) \; \overline{(g(\nu+\dil\cdot))\widehat{\phantom{x}}(\dil^\ast\ome)} \; \hath(\ome).
\eeaN
It is easy to see that the following identity is true:
\be
\label{eq:char}
\sum_{\nu\in\set}e^{i\gam\cdot\nu}=q\delta_{\gam,0}=
\cases{
q,\quad\mbox{if $\gam=0$},\cr
             0,\quad\mbox{if $\gam\in\set^\ast\bks0$},\cr
}
\ee
where $q=|\det\dil|$. Since $h$ is interpolatory, we have
\be
\label{eq:zerocoset}
h(\dil\cdot)\widehat{\phantom{x}}(\dil^\ast\ome)={1\over q} \; .
\ee
Then by using (\ref{eq:char}), (\ref{eq:zerocoset}), (\ref{eq:polymask_s}), and the fact that $(g(\nu+\dil\cdot))\widehat{\phantom{x}}(\dil^\ast\ome)=(g(\nu+\dil\cdot))\widehat{\phantom{x}}(\dil^\ast(\ome+\gam))$, for any $\nu\in\set$, $\ome\in\Td$, and $\gam\in\set^\ast$, we get
\beaN
&&\overline{\tau(\ome+\gam)}\tau^\du(\ome)+\sum_{\nu\in\set'}\overline{t_\nu(\ome+\gam)}t_\nu^\du(\ome)\\
&\,=\,& \delta_{\gam,0}-\sum_{\tilde{\gam}\in\set^\ast}\overline{\hatg(\ome+\tilde{\gam})}\;\hath(\ome+\tilde{\gam})\hath(\ome)+q\sum_{\nu\in\set}(h(\nu+\dil\cdot))\widehat{\phantom{x}}(\dil^\ast\ome) \; \overline{(g(\nu+\dil\cdot))\widehat{\phantom{x}}(\dil^\ast\ome)} \; \hath(\ome) \\
&=& \delta_{\gam,0}-\left(\sum_{\tilde{\gam}\in\set^\ast}\overline{\hatg(\ome+\tilde{\gam})}\;\hath(\ome+\tilde{\gam})-q\sum_{\nu\in\set}(h(\nu+\dil\cdot))\widehat{\phantom{x}}(\dil^\ast\ome) \; \overline{(g(\nu+\dil\cdot))\widehat{\phantom{x}}(\dil^\ast\ome)} \right) \hath(\ome).
\eeaN
Moreover, by (\ref{eq:polymask_s}), and the dual identity of (\ref{eq:char}):
\bea
\label{eq:dualchar}
\sum_{\gam\in\set^\ast}e^{i\gam\cdot\nu}=q\delta_{\nu,0}=
\cases{
q,\quad\mbox{if $\nu=0$},\cr
              0,\quad\mbox{if $\nu\in\set'\bks0$},\cr
}
\eea
we have
\beaN
&&\sum_{\gam\in\set^\ast}\overline{\hatg(\ome+\gam)}\;\hath(\ome+\gam)\\
&\,=\,&\sum_{\gam\in\set^\ast}\left(\sum_{\nu\in\set}e^{i(\ome+\gam)\cdot\nu}\; \overline{(g(\nu+\dil\cdot))\widehat{\phantom{x}}(\dil^\ast\ome)}\right)\left(\sum_{\tilde{\nu}\in\set}e^{-i(\ome+\gam)\cdot\tilde{\nu}}\; (h(\tilde{\nu}+\dil\cdot))\widehat{\phantom{x}}(\dil^\ast\ome)\right)\\
&=&\sum_{\tilde{\nu}\in\set}\left(\sum_{\nu\in\set}\Big(\sum_{\gam\in\set^\ast}e^{i\gam\cdot(\nu-\tilde{\nu})}\Big)e^{i\ome\cdot\nu} \; \overline{(g(\nu+\dil\cdot))\widehat{\phantom{x}}(\dil^\ast\ome)} \right)e^{-i\ome\cdot\tilde{\nu}}(h(\tilde{\nu}+\dil\cdot))\widehat{\phantom{x}}(\dil^\ast\ome)\\
&=& q\sum_{\nu\in\set}(h(\nu+\dil\cdot))\widehat{\phantom{x}}(\dil^\ast\ome) \; \overline{(g(\nu+\dil\cdot))\widehat{\phantom{x}}(\dil^\ast\ome)}.
\eeaN
Therefore,
\beaN
\overline{\tau(\ome+\gam)}\tau^\du(\ome)+\sum_{\nu\in\set'}\overline{t_\nu(\ome+\gam)}t_\nu^\du(\ome)=\delta_{\gam,0}.
\eeaN
This concludes the proof.

\subsection{Proof of the identities in the decomposition algorithm}
\label{subS:proofofalgorithm}
The {\it polyphase decomposition of a signal} $y_j$ with respect to the dilation matrix $\dil=p{\tt I}_\dm$, with $q=|\det\dil|=p^\dm$, is defined as the Fourier series of $y_j(\nu+p\cdot)$, $\nu\in\set$:
\beaN
{\tt Y}_{\nu,j}(\ome):=(y_j(\nu+p\cdot))\widehat{\phantom{x}}(\ome)={\disp {1\over q}}\sum_{k\in\Zd}y_j(\nu+p k)e^{-ik\cdot \ome}, \quad \ome\in\Td,
\eeaN
and the {\it polyphase representation of a signal} $y_j$ is defined as the column $q$-vector of the form
\beaN{\tt {Y_j}}(\ome):=[{\tt Y}_{\nu_0,j}(\ome),{\tt Y}_{\nu_1,j}(\ome),\cdots,{\tt Y}_{\nu_{q-1},j}(\ome)]^T,\quad \ome\in\Td,
\eeaN
where $\nu_0=0$ and $\nu_j$, $j=1,\ldots,q-1$, are the ordered elements of the set $\set'$.
Let $Y_{j-1}$ and $W_{\nu,j-1}$ be the Fourier series of coarse coefficients $y_{j-1}$ and wavelet coefficients $w_{\nu,j-1}$, $\nu\in\set'$, respectively, 
\beaN
Y_{j-1}(\ome)&{\,:=\,}&{1\over q}\sum_{k\in\Zd}y_{j-1}(k)e^{-ik\cdot\ome},\\
W_{\nu,j-1}(\ome)&:=&{1\over q}\sum_{k\in\Zd}w_{\nu,j-1}(k)e^{-ik\cdot\ome}, \quad \nu\in\set',
\eeaN
for every $\ome\in\Td$.
Then a $1$-level-down decomposition, in frequency domain, can be written as
\beaN
\left[\begin{array}{c}Y_{j-1}(\ome)\\ W_{j-1}(\ome)\end{array}\right]={\tt A}(\ome)\left[\begin{array}{c}{\tt Y}_{\nu_0,j}(\ome)\\ \tilde{\tt Y}_{j}(\ome)\end{array}\right],
\eeaN
where $ W_{j-1}(\ome):=[W_{{\nu_1},j-1}(\ome),\cdots,W_{{\nu_{q-1}},j-1}(\ome)]^T$ and $\tilde{\tt Y}_j(\ome)$ is a subvector of ${\tt Y}(\ome)$ of length $q-1$ obtained by removing the first entry. 

A key observation, which is also part of the reason why the fast prime coset sum wavelet algorithms is fast, is that ${\tt A}(\ome)$ as defined in (\ref{eq:fb}) can be decomposed into two triangular matrices:
$$
{\tt A}(\ome)=
\left[\begin{array}{cc}1&~~~~~~~\tilde{\tt G}(\ome)\\0&~~~~~~~{\tt I}_{q-1}\end{array}\right]
\left[\begin{array}{cc}1&0\\ -q\;\tilde{\tt H}(\ome)~~~~~&{\tt I}_{q-1}\end{array}\right].
$$
Thus we can calculate $W_{j-1}(\ome)$ first, then use $ W_{j-1}(\ome)$ to compute $Y_{j-1}(\ome)$ as follows,
\bea
\label{eq:i}
W_{j-1}(\ome)&{\,=\,}&\,-q\;\tilde{\tt H}(\ome){\tt Y}_{\nu_0,j}(\ome)+\tilde{\tt Y}_{j}(\ome),\\
\label{eq:ii}
Y_{j-1}(\ome)&=&{\tt Y}_{\nu_0,j}(\ome)+\tilde{\tt G}(\ome)W_{j-1}(\ome).
\eea
From these (\ref{eq:i}) and (\ref{eq:ii}), we now derive Step {\tt (i)} and {\tt (ii)} in our decomposition algorithm. 

From (\ref{eq:i}), (\ref{eq:poly_h}) and Lemma~\ref{lemma:polyconnect}, we know that, for any $\nu\in\set'$, 
\beaN
W_{\nu,j-1}(\ome)&{\,=\,}&\,- q\;{\tt H}_\nu(\ome) {\tt Y}_{\nu_0,j}(\ome)+{\tt Y}_{\nu,j}(\ome)\\
&=&\,-q\;(h(\nu+p\cdot))\widehat{\phantom{x}}(\ome) {\tt Y}_{\nu_0,j}(\ome)+{\tt Y}_{\nu,j}(\ome) \\
&=&\,-{p\over p-1}\sum_{l\in\Fp'}e^{i\ome\cdot{ (\nu-\eta(l,\nu) l) \over p} }\Big(H(l+p\cdot)\Big)\widehat{\phantom{x}}(\ome\cdot\eta(l,\nu)){\tt Y}_{\nu_0,j}(\ome)+{\tt Y}_{\nu,j}(\ome).
\eeaN
Hence,
\beaN
\disp{1\over p^\dm}\sum_{k\in\Zd}&&w_{\nu,j-1}(k)e^{-i k\cdot\ome}=W_{\nu,j-1}(\ome)
={1\over p^\dm}\sum_{k\in\Zd}y_j(pk+\nu)e^{-ik\cdot\ome}\\
&&\,-{p\over p-1}\sum_{l\in\Fp'}e^{i\ome\cdot{ (\nu-\eta(l,\nu) l) \over p} }\disp{1\over p}\sum_{m\in\ZZ}H(l+pm)e^{-im(\ome\cdot\eta(l,\nu)) }{1\over p^\dm}\sum_{k'\in\Zd}y_j(pk')e^{-ik'\cdot\ome}.
\eeaN
Therefore,
\beaN
&&\sum_{k\in\Zd}w_{\nu,j-1}(k)e^{-ik\cdot\ome}\\
&{\,=\,}& \,\sum_{k\in\Zd}y_j(pk+\nu)e^{-ik\cdot\ome} \\
&& ~~~~ -{1\over p-1}\sum_{l\in\Fp'}\sum_{m\in\ZZ}\sum_{k'\in\Zd}e^{i\ome\cdot{ (\nu-\eta(l,\nu) l) \over p} }e^{-im(\ome\cdot\eta(l,\nu)) }e^{-ik'\cdot\ome}H(l+pm)y_j(pk') \\
&=&\,\sum_{k\in\Zd}y_j(pk+\nu)e^{-ik\cdot\ome} \\
&& ~~~~ -{1\over p-1}\sum_{l\in\Fp'}\sum_{m\in\ZZ}\sum_{k\in\Zd}e^{-ik\cdot\ome}H(l+pm)y_j(pk+\nu-\eta(l,\nu)(pm+l))\\
&=&\sum_{k\in\Zd}\left(y_j(pk+\nu)-\disp{1\over p-1}\disp\sum_{l\in\Fp'}\disp\sum_{m\in\ZZ}H(l+pm)y_{j}(pk+\nu-\eta(l,\nu)(pm+l))\right)e^{-ik\cdot\ome}\\
&=&\sum_{k\in\Zd}\left(y_j(pk+\nu)-\disp{1\over p-1}\disp\sum_{l\in\Fp'}\disp\sum_{m\equiv l}H(m)y_{j}(pk+\nu-\eta(l,\nu)(m))\right)e^{-ik\cdot\ome},
\eeaN
which in turn implies that we have for any $k\in\Zd$ and $\nu\in\set'$,
\beaN
w_{\nu,j-1}(k)\,=\,y_j(pk+\nu)-\disp{1\over p-1}\disp\sum_{l\in\Fp'}\disp\sum_{m\equiv l}H(m)y_{j}(pk+\nu-\eta(l,\nu)m).
\eeaN
This is exactly Step {\tt (i)} in our decomposition algorithm.

From (\ref{eq:ii}), (\ref{eq:poly_g}) and by Lemma~\ref{lemma:polyconnect} we know that
\beaN
&&Y_{j-1}(\ome){\,=\,}{\tt Y}_{\nu_0,j}(\ome)+\sum_{\nu\in\set'}{\tt G}_\nu(\ome)W_{\nu,j-1}(\ome)\\
&&={\tt Y}_{\nu_0,j}(\ome)+\sum_{\nu\in\set'}\overline{g(\nu+p\cdot)\widehat{\phantom{x}}(\ome)}\; W_{\nu,j-1}(\ome)\\
&&={\tt Y}_{\nu_0,j}(\ome)+\sum_{\nu\in\set'}{1\over (p-1)p^{\dm-1}}\sum_{l\in\Fp'}e^{i\ome\cdot{ (\eta(l,\nu) l-\nu) \over p} } \; \overline{\Big(G(l+p\cdot)\Big)\widehat{\phantom{x}}(\ome\cdot\eta(l,\nu))} \; W_{\nu,j-1}(\ome).
\eeaN
Hence,
\beaN
&&{1\over p^\dm}\sum_{k\in\Zd}y_{j-1}(k)e^{-ik\cdot\ome}=Y_{j-1}(\ome)={1\over p^\dm}\sum_{k\in\Zd}y_j(pk)e^{-ik\cdot\ome}+\\
&&\sum_{\nu\in\set'}{1\over (p-1)p^\dm}\sum_{l\in\Fp'}e^{i\ome\cdot{ (\eta(l,\nu) l-\nu) \over p} }   \sum_{m\in\Zd}G(l-pm)e^{-im(\ome\cdot\eta(l,\nu))} {1\over p^\dm}\sum_{k'\in\Zd}w_{\nu,j-1}(k')e^{-ik'\cdot\ome}.
\eeaN
Therefore, we have 
\beaN
&&\sum_{k\in\Zd}y_{j-1}(k)e^{-ik\cdot\ome}\\
&{\,=\,}&\sum_{k\in\Zd}y_j(pk)e^{-ik\cdot\ome} \\
&& +{1\over (p-1)p^{\dm}}\sum_{\nu\in\set'}\sum_{l\in\Fp'}\sum_{m\in\Zd}\sum_{k'\in\Zd}e^{i\ome\cdot{ (\eta(l,\nu) l-\nu) \over p} }e^{-im(\ome\cdot\eta(l,\nu))}e^{-ik'\cdot\ome}G(l-pm)w_{\nu,j-1}(k') \\
&{\,=\,}&\sum_{k\in\Zd}y_j(pk)e^{-ik\cdot\ome} \\
&& +{1\over (p-1)p^{\dm}}\sum_{\nu\in\set'}\sum_{l\in\Fp'}\sum_{m\in\Zd}\sum_{k\in\Zd}e^{-ik\cdot\ome}G(l-pm)w_{\nu,j-1}(k-{\nu-\eta(l,\nu) l \over p} - \eta(l,\nu)m)\\
&{\,=\,}&\sum_{k\in\Zd}\left( y_j(pk)+{1\over (p-1)p^{\dm}}\sum_{\nu\in\set'}\sum_{l\in\Fp'}\sum_{m\equiv l}G(m)w_{\nu,j-1}(k-{\nu-\eta(l,\nu) m \over p} )\right)e^{-ik\cdot\ome},
\eeaN
As a result, we have, for any $k\in\Zd$,
\beaN
y_{j-1}(k){\,=\,}
y_j(pk)+{1\over (p-1)p^{\dm}}\sum_{\nu\in\set'}\sum_{l\in\Fp'}\sum_{m\equiv l}G(m)w_{\nu,j-1}(k-{\nu-\eta(l,\nu)m\over p}).
\eeaN
This is exactly Step {\tt (ii)} in our decomposition algorithm.

\bibliographystyle{plain}
\bibliography{IEEEabrv,draft}







\end{document}